\numberwithin{equation}{section}
\theoremstyle{plain} \newtheorem{thm}{Theorem}[section]
\newtheorem{lemma}[thm]{Lemma} 
\newtheorem{proposition}[thm]{Proposition}
\newtheorem{corollary}[thm]{Corollary}
 \theoremstyle{definition}
 \newtheorem{remark}[thm]{Remark}
\def\P {\mathbb {P}}    \def\Q {\mathbb {Q}}
 \def\phi{\varphi} \def\D{\Delta} 
\def\tpar{/ \! /}  \def\R{\mathbb{R}} \def\E{\mathbb{E}}
 \def\1{\mathbbm{1}}
\def\W {\textbf{W}}
\def\mathpal#1{\mathop{\mathchoice{\text{\rm #1}}%
    {\text{\rm #1}}{\text{\rm #1}}%
    {\text{\rm #1}}}\nolimits} 
\newcommand\Ric{\mathpal{Ric}}
\newcommand\Id{\mathpal{Id}}
\newcommand\trace{\mathpal{trace}}
\title{Stochastic proof of upper bound for the heat kernel coupled with geometric flow, and Ricci flow}
\author [K.A. Coulibaly-Pasquier]{Kol\'eh\`e A. Coulibaly-Pasquier}
\address{Nancy}
\email{kolehe.coulibaly@iecn.u-nancy.fr}
\begin{document}

\maketitle

\begin{abstract}
 We  give a proof of Gaussian upper bound for the heat kernel coupled with the Ricci flow. Previous proofs by Lei Ni \cite{Ni} use Harnack
 inequality and doubling volume property, also the recent proof by  Zhang and Cao \cite{Z-C} uses Sobolev type inequality that is conserved along Ricci flow. 
We will use a horizontal coupling of curve \cite{ATC2} Arnaudon Thalmaier, C. , in order to generalize  Harnack inequality with power -for inhomogeneous heat equation  - introduced by  
 F.Y Wang.  In the case of Ricci flow, we will derive on-diagonal bound of the Heat kernel along Ricci flow
 ( and also for the usual Heat kernel on complete Manifold).
%Finks to the Grigorian technique, that say : the on-diagonal bound is sufficient to obtain Gaussian upper bound for the Heat kernel.

\end{abstract}

\section{Coupling and Harnack inequality with power}
In the first part of this section, we will focus on the operator of the type $L_t := \frac12 \D_{g(t)} $,
 where $\D_{g(t)}$ is the Laplace operator associated to a time dependent family of metrics  $ g(t)$. We will suppose that all considered $g(t)$-Brownian motion  is non explosive.
 For example when the family of metric comes from the backward Ricci flow, this have been proved in \cite{K.P.}.
%
% Let us recall the theorem in \cite{} Arnaudon Thalmaier, C. .
 Let $\gamma(t)$ be a $C^{1} (M) $ geodesic curve such that $\gamma (0) = x $ and $\gamma (1)=y$ and $X_t (u)$ be the the horizontal $L(t)$-diffusion $C^1$ path space 
 in $C^1([0,T],M)$ over $X^0$, started at $u \mapsto \gamma (\frac{u}{T})$. Let $X_t(x) $ be a $g(t)$-Brownian motion that start at $x$, $\tpar_{0,t}$ the $g(t)$ parallel transport,
 and  $\W_t$ the damped parallel transport
that satisfies the following Stratonovich covariant equation:
$$ *d ((\tpar_{0,t})^{-1} (\textbf{W}_{0,t})) = -\frac12 (\tpar_{0,t})^{-1} ( \Ric_{g(t)} - \partial_{t}(g(t)))^{\# g(t)}(\textbf{W}_{0,t}) \,dt $$
with
$$ \textbf{W}_{0,t} : T_{x}M \longrightarrow T_{X_{t}(x)}M , \textbf{W}_{0,0}=\Id_{T_{x}M}.$$

\begin{proposition}
 The process  $X_t (\gamma(\frac{t}{T}))$ satisfies the following stochastic differential equation :
$$ d^{\nabla_t} ( X_{.} (\gamma(\frac{.}{T})))_t = P_{0,\frac{t}{T}}^{t, X_t( . )} d^{\nabla_t} X_t^0(x) + \frac{1}{T} \W(X_{.}( \gamma (\frac{t}{T})) )_t \dot{\gamma}(\frac{t}{T}) \, dt$$
\end{proposition}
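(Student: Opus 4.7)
The plan is to apply a time-dependent It\^o chain rule to the composition of the horizontal $L(t)$-diffusion $(t,u)\mapsto X_t(u)$ with the deterministic reparametrization $t\mapsto u(t)$ that selects, at each time $t$, the starting point $\gamma(t/T)$ on $\gamma$. Concretely one views $Y_t:=X_t(u(t))$ as a semi-martingale on $M$ obtained by composing a two-parameter random field (smooth in $u$, a diffusion in $t$) with a smooth path in the index parameter.

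First I would isolate the two characterizing properties of the horizontal diffusion in path space coming from \cite{ATC2}. For each fixed $u$, $(X_t(u))_{t\ge 0}$ is an $L(t)$-Brownian motion whose covariant It\^o differential is obtained from that of the base process $X_t^0(x)=X_t(0)$ by $g(t)$-parallel transport along the $C^1$ curve $u'\mapsto X_t(u')$, that is,
\[
d^{\nabla_t} X_t(u)=P^{t,X_t(\cdot)}_{0,u}\,d^{\nabla_t} X_t^0(x).
\]
Moreover, for each fixed $t$ the map $u\mapsto X_t(u)$ is $C^1$ and its derivative evolves by the damped parallel transport along $s\mapsto X_s(u)$,
\[
\partial_u X_t(u)=\W_{0,t}\,\partial_u X_0(u),
\]
where $\W$ is the process satisfying the displayed Stratonovich covariant equation.

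Second, I would apply the parameter-dependent It\^o formula to $Y_t=X_t(u(t))$. Its covariant differential splits as the purely stochastic term obtained by freezing the index at $u=u(t)$, plus a finite-variation drift equal to $\partial_u X_t(u)|_{u=u(t)}\,\dot u(t)\,dt$. Inserting the first property yields $P^{t,X_t(\cdot)}_{0,t/T}\,d^{\nabla_t} X_t^0(x)$ for the stochastic term; inserting the second, together with the initial derivative computed from $X_0(u)=\gamma(u/T)$, gives the drift $\frac{1}{T}\,\W(X_\cdot(\gamma(\cdot/T)))_t\,\dot\gamma(t/T)\,dt$. Summing the two contributions reproduces the announced identity.

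The main obstacle is justifying the interchange of the $u$-differentiation with the It\^o differential in the chain rule: this is not automatic for semi-martingales, but the horizontal lift construction of Arnaudon--Thalmaier is tailored precisely so that the two-parameter field $(t,u)\mapsto X_t(u)$ is $C^1$ in $u$ uniformly on compacts in the Kunita sense, which legitimizes the differentiation. Once this regularity is in hand the computation is a direct chain rule, and the algebraic shape of both terms is forced by the two properties above.
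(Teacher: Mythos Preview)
Your proposal is correct and follows essentially the same route as the paper: decompose the covariant differential of $Y_t=X_t(\gamma(t/T))$ into the contribution at frozen $u$ plus the $u$-derivative drift, and then identify each piece via the two defining properties of the horizontal diffusion from \cite{ATC2}. The only cosmetic difference is that the paper makes the chain rule explicit by passing through the Stratonovich differential (where ordinary chain rules hold) and then converting back to the covariant It\^o differential via $d^{\nabla_t}Y_t=\tpar_t\bigl(d\int_0^t\tpar_s^{-1}*dY_s\bigr)$, whereas you invoke a parameter-dependent It\^o formula directly; since $u(t)=t/T$ has bounded variation there is no It\^o correction term, so the two presentations coincide.
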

\begin{proof}
 We pass to the Stratonovich differential and obtain the following chain rule formula:
 $$ *d (X_{.} ( \gamma (\frac{.}{T}))  )_{t_0} = *d (X_{.} (\gamma (\frac{t_0}{T})))_{t_0} +  \frac{d X_{t_0}(\frac{t}{T})} {d t}_{t = t_0} dt_{0} .$$
We use Theorem 3.1 in \cite{ATC2} to identify the last term of the right hand side:
 $$\frac{d X_{t_0}(\frac{t}{T})} {d t}_{t = t_0} = \frac{1}{T} \W(X_{.} (\gamma (\frac{t_0}{T}))  )_{t_0} (\dot{\gamma}(\frac{t_0}{T})) .$$ 
Now we come back to the It\^o differential equation using the following relation:
 $$ d^{\nabla_t} Y_t = \tpar_t ( d \int_0 ^t \tpar_s^{-1} *d Y_s) , $$
and we obtain 
$$\begin{aligned}
 d^{\nabla_{t_{0}}} ( X_{.} (\gamma(\frac{.}{T})))_{t_{0}} &=   \tpar_{t_0} \big( d \int_0 ^{t_0} \tpar_s^{-1} *d (X_{.} (\gamma (\frac{t_0}{T})))_{s} + 
 \frac{1}{T} \tpar_s^{-1} \W(X_{.}(\gamma (s))  )_{s} (\dot{\gamma}(\frac{s}{T})) ds \big) \\
                                                    &= d^{\nabla_{t_0}} ( X_{.} (\gamma(\frac{t_0}{T})))_{t_0} + \frac{1}{T} \W(X_{.}( \gamma (\frac{t_0}{T})) )_{t_0} \dot{\gamma}(\frac{t_0}{T}) .
\end{aligned}  $$

We use again Theorem 3.1 in \cite{ATC2} to  identify  $$d^{\nabla_{t_0}} ( X_{.} (\gamma(\frac{t_0}{T})))_{t_0} = P_{0,\frac{t_0}{T}}^{t_0, X_{t_0}( . )} d^{\nabla_{t_0}} X_{t_0}^{0}(x) .$$
 
\end{proof}

 Let $$N_t := - \frac{1}{T} \int_0^t \langle  P_{0,\frac{s}{T}}^{s, X_s( . )} d^{\nabla_s} X_s^0(x) , \W(X_{.}( \gamma (\frac{s}{T})) )_s \dot{\gamma}(\frac{s}{T})   \rangle_{g(s)},$$
 $$R_t := \exp \big( N_t -\frac12 \langle N \rangle_t  \big).$$
In many situations the Novikov's criterion will be satisfied, so we could expect $R_t$ to be a martingale. 
Define a new probability measure as :
$$ \Q := R_T \P .$$
 
\begin{proposition}\label{croise}
Suppose that Novikov's criterion is satisfied for $N_t$. Then under $\Q$, the process $X_{t}(\gamma (\frac{t}{T}) $ is a $L_t$-diffusion that starts at $x$,
 that finishes at $X_T(y)$.
\end{proposition}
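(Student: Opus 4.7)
The plan is to read off the semimartingale decomposition of $Y_t := X_t(\gamma(t/T))$ supplied by the previous proposition and then use Girsanov's theorem with the density $R_T$ to kill the drift. Write
$$ dM_t := P_{0,t/T}^{t, X_t(\cdot)} d^{\nabla_t} X_t^0(x), \qquad b_t := \frac{1}{T} \W(X_{\cdot}(\gamma(t/T)))_t \dot{\gamma}(t/T), $$
so that $d^{\nabla_t} Y_t = dM_t + b_t\,dt$. Because $X^0(x)$ is a $g(t)$-Brownian motion and the transports $P$ preserve the Riemannian inner product, $M$ is an Itô-martingale (in the $\nabla^t$-sense) with quadratic variation tensor $g(t)\,dt$; on its own $dM$ would already be the increment of an $L_t$-diffusion. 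It therefore remains to cancel the adapted drift $b_t\,dt$, which is exactly what the process $N_t$ is tailored to do.

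The key computation is the bracket $d\langle M,N\rangle_t$. From the definition
$$ dN_t = -\frac{1}{T}\,\langle dM_t,\; \W(X_{\cdot}(\gamma(t/T)))_t \dot{\gamma}(t/T)\rangle_{g(t)}, $$
and the fact that the quadratic variation tensor of $M$ is $g(t)\,dt$, the Riesz identification gives $d\langle M,N\rangle_t = -b_t\,dt$. Novikov's condition being assumed, $R_t$ is a true $\P$-martingale, so $\Q = R_T\P$ is a probability measure. Girsanov's theorem, in its form adapted to the time-dependent connection $\nabla^t$ (brackets are unaffected, the Itô drift of $Y$ is augmented by $d\langle M,N\rangle_t$), yields under $\Q$
$$ d^{\nabla_t} Y_t \stackrel{\Q}{=} dM^{\Q}_t + d\langle M,N\rangle_t + b_t\,dt = dM^{\Q}_t, $$
where $M^{\Q}$ has the same quadratic variation tensor $g(t)\,dt$ as $M$. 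Vanishing of the Itô drift in $d^{\nabla_t}Y_t$ plus the correct quadratic variation characterises $Y$ as an $L_t$-diffusion.

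The boundary conditions are built in: $Y_0 = X_0(\gamma(0)) = x$ since $\gamma(0)=x$, and $Y_T = X_T(\gamma(1)) = X_T(y)$ since $\gamma(1)=y$; these identities are pathwise and hold under both $\P$ and $\Q$. The main obstacle in carrying out this program rigorously is the careful bookkeeping of the quadratic covariation in the time-varying metric setting, making sure that the $g(t)$-inner product used in the definition of $N_t$ is the same one that controls the bracket of $M$; once that is verified the Girsanov cancellation is exact. A secondary technical point, already bypassed here by hypothesis, is the verification of Novikov's criterion, which in applications one checks either by boundedness of $\W$ and $\dot\gamma$ along the relevant time interval or by a localisation argument.
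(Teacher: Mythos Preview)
Your argument is correct and is precisely the Girsanov route that the paper itself flags at the outset (``One could directly apply Girsanov's theorem'') before opting for a different presentation. You work at the level of the It\^o differential $d^{\nabla_t}Y_t$, compute the covariation $d\langle M,N\rangle_t=-b_t\,dt$, and let Girsanov cancel the drift; the paper instead verifies the martingale-problem characterisation directly, namely it takes a test function $f\in C^2(M)$, applies It\^o's formula to $R_t f(Y_t)$, shows that $R_t f(Y_t)-\int_0^t R_s\,\tfrac12\Delta_s f(Y_s)\,ds$ is a $\P$-martingale, and then uses that $U$ is a $\Q$-martingale iff $RU$ is a $\P$-martingale. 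Your approach is cleaner once one is willing to invoke Girsanov for $\nabla^t$-martingales on a manifold with time-dependent metric; the paper's approach avoids that packaging by reducing everything to real-valued martingales against test functions, at the cost of a slightly longer computation. Both reach the same endpoint, and the boundary conditions $Y_0=x$, $Y_T=X_T(y)$ are handled identically.
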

\begin{proof}

One could directly apply Girsanov's theorem. We prefer here to give a direct proof.
Let $f \in C^{2}(M,\R)$. Recall that $R_t $ is a $\P$-martingale, and $ P_{0,\frac{t}{T}}^{t, X_t( . )} $ is an isometry for the metric $g(t)$. Use It\^o formula to compute :
$$
\begin{aligned}
& d( R_t f(X_{t}(\gamma (\frac{t}{T})))) \\
&= \frac12 R_t \D_t f(X_{t}(\gamma (\frac{t}{T})))dt + d M^{\P}_t \\
\end{aligned}
 ,$$ 
where $ M^{\P}_t $  is a martingale for $\P$. Also 

$$R_t f(X_{t}(\gamma (\frac{t}{T}))) - \frac12 R_t \int_0^t \D_sf(X_{s}(\gamma (\frac{s}{T}))) ds = $$  
$$  R_t f(X_{t}(\gamma (\frac{t}{T}))) - \frac12  \int_0^t R_s \D_sf(X_{s}(\gamma (\frac{s}{T}))) ds +  \tilde{M}^{\P}_s       .         $$
Where $ \tilde{M}^{\P}_s$ is a $\P$-martingale.
Use the fact that $U_t$ is a $\Q $-martingale if and only if $R_t U_t$ is a $\P$-martingale.
So $ f(X_{t}(\gamma (\frac{t}{T}))) - \frac12 \int_0^t \D_sf(X_{s}(\gamma (\frac{s}{T}))) ds$ is a $\Q$ martingale i.e. $ X_{t}(\gamma (\frac{t}{T}))$ is a $L_t$ diffusion under the probability $\Q. $
It is clear that it finishes at $X_T(y)$, so $X_{t}(\gamma (\frac{t}{T})))$ can be seen as a coupling between two $L_t$ diffusions that started at different point up to  one change of probability.
  
\end{proof}

 Let $ \alpha_{i,j} (t) $ be a family of symmetric $2$-tensors on $M$. We will consider the following heat equation coupled with a geometric flow.

\begin{equation}\label{heat}
  \left\{\begin{array}{l} 
  \partial_t g_{i,j} = \alpha_{i,j} (t) \\
  \partial_{t}f(t,x) = \frac12 \D_{t} f(t,x) \\
   f(0,x)=f_{0}(x),
   \end{array} \right .
\end{equation}

\begin{remark}
 Such flow, have been widely investigated in the literature. Let us mention the following situation:
\begin{itemize}
 \item The most famous  case is when $\alpha_{i,j} (t) := 0 $, this is the case of constant metric and equation \ref{heat} is the usual heat equation in $M$.
 \item   $\alpha_{i,j} (t):= - \Ric_{i,j} (g(t)) $, that is the Ricci flow.
 \item  One can also consider $\alpha_{i,j} (t):= -2h H_{i,j} (g(t))$, where $H_{i,j} (g(t)) $  is the second fundamental form relatively to the metric $g(t)$, and $h$ is the mean curvature, when the family of metric comes from the mean curvature flow.   
\end{itemize}
In all these cases  a notion of $ g(t) $-Brownian motion, i.e. a $\D_t$ diffusion,  parallel transport, and damped parallel transport  has been given in \cite{metric,ATC1}.
\end{remark}

Let $T_{c}$ be the maximal life time of geometric flow $g(t)_{t \in[0,T_{c}[}$. For all $T < T_{c} $, let  $X^{T}_{t}$ be a $g(T-t)$-Brownian motion and $\tpar^T_{0,t}$ the associated parallel transport.
In this case, for a solution $f(t,.)$ of (\ref{heat}), $ f(T-t,X^{T}_{t}(x))$ is a local martingale for any $x \in M$. So the following
 representation holds for the solution :
  $$ f(T,x) := \E_x[ f_0(X_T^T)] . $$

We introduced a further subscript $T$ referring to the fact that a time reversal step is involved.
%The inversion of time in the above equation force us to make more notation with a subscript $T$ to indicate the sens of the time.

Let $\textbf{W}_{0,t}^{T}$ be the damped parallel transport along the $g(T-t)$-Brownian motion.Let us recall the covariant differential equation satisfied by this damped parallel transport \cite{metric} :
$$ *d ((\tpar^{T}_{0,t})^{-1} (\textbf{W}_{0,t}^{T})) = -\frac12 (\tpar^{T}_{0,t})^{-1} ( \Ric_{g(T-t)} - \partial_{t}(g(T-t)))^{\# g(T-t)}(\textbf{W}_{0,t}^{T}) \,dt $$
with
$$ \textbf{W}_{0,t}^{T} : T_{x}M \longrightarrow T_{X_{t}^{T}(x)}M , \textbf{W}_{0,0}^{T}=\Id_{T_{x}M}.$$

All the over subscript $T$ we will mean that the family of metrics is $g(T-t)$.

\begin{proposition}\label{explosion}
 Suppose that there exist $\overline {\alpha} ,   \underline{\alpha}  \ge 0 $ and  $\overline{K},  \underline {K}\ge 0 $ such that :
  $$ -  \underline{\alpha}  g(t)    \le \alpha (t) \le  \overline {\alpha}  g(t) ,$$
  $$ -  \underline{K} g(t)   \le \Ric(t) \le    \overline {K} g(t) ,$$
then the $g(t)$-Brownian motion, and the $g(T-t)$-Brownian motion  does not explode before the time $T_c$.
\end{proposition}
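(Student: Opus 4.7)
The plan is to adapt the classical radial-process Lyapunov argument (Itô--Kendall) to the time-dependent metric setting. Fix $T<T_c$; it suffices to handle the $g(t)$-Brownian motion, because the hypotheses are symmetric under time reversal: the family $\tilde g(s):=g(T-s)$ satisfies bounds of the same shape, with $\tilde\alpha(s)=-\alpha(T-s)$ and $\Ric(\tilde g(s))$ still two-sidedly bounded, so $X^T$ is treated identically. The two-sided bound on $\alpha$ gives, by Gronwall, $e^{-\underline{\alpha}T}g(0)\le g(t)\le e^{\overline{\alpha}T}g(0)$ on $[0,T]$, so the metrics $g(t)$ are uniformly equivalent and the notion of ``going to infinity'' is unambiguous.

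Fix a base point $x_0\in M$ and set $r_t:=d_{g(t)}(x_0,X_t)$. Applying Itô's formula to $r_t$ along the $g(t)$-Brownian motion yields, outside the time-dependent cut locus,
\[
dr_t \;\le\; d\beta_t \;+\; \tfrac{1}{2}\D_{g(t)} r_t\,dt \;+\; (\partial_t r_t)(X_t)\,dt
\]
for a real Brownian motion $\beta_t$, the cut locus contributing only a non-positive local time (Kendall's trick). The spatial drift is controlled by the Laplacian comparison theorem for the fixed metric $g(t)$ under $\Ric(t)\ge-\underline{K}g(t)$: there is a constant $C_1=C_1(\dim M,\underline{K})$ with $\D_{g(t)} r_t\le C_1(1+r_t)$. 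The time drift is controlled by the first variation of arc length along the deforming metric: for a minimizing $g(t)$-geodesic $\gamma$ from $x_0$ to $x$, parametrised by $g(t)$-arc length,
\[
(\partial_t r_t)(x)=\tfrac{1}{2}\int_0^{r_t(x)}\alpha(t)(\dot\gamma,\dot\gamma)\,du,
\]
so the two-sided bound on $\alpha$ immediately gives $|(\partial_t r_t)(x)|\le\tfrac12\max(\underline{\alpha},\overline{\alpha})\,r_t(x)$.

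Combining these bounds produces $dr_t\le d\beta_t+C_2(1+r_t)\,dt$. A standard localisation at $\tau_R=\inf\{t\le T:r_t\ge R\}$ together with Gronwall yields $\E[r_{t\wedge\tau_R}]\le C(T)$ uniformly in $R$, so by Fatou $r_t$ is almost surely finite on $[0,T]$ and explosion strictly before $T_c$ is ruled out. The same argument applied to $\tilde g(s)=g(T-s)$ handles $X^T$. The main obstacle is purely technical: regularising $r_t$ near $x_0$ (where it is only Lipschitz) and across the time-dependent cut locus while retaining the linear drift bound. This is handled by Kendall's smoothing (or by perturbing the base point and taking a limit), and requires no new estimate beyond the Laplacian comparison and the first-variation formula, both of which follow directly from the stated bounds on $\Ric(t)$ and $\alpha(t)$.
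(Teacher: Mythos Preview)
Your proposal is correct and follows essentially the same route as the paper. The paper's proof is a three-line sketch: apply It\^o's formula to the radial process $d_t(x_0,X_t)$, invoke the Laplacian comparison theorem for the distance function, and then use the comparison theorem for stochastic differential equations; you carry out exactly these steps, the only cosmetic difference being that in place of an SDE comparison at the end you bound $\E[r_{t\wedge\tau_R}]$ directly via Gronwall and conclude by Fatou, which is an equivalent way of extracting non-explosion from a linear drift bound.
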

\begin{proof}
 This is a sufficient condition but it is far from being a necessary one, for the process to don't explode. 
 Use the It\^o formula for $ d_t (x_0, X_t)$, the comparison theorem of the laplacian of the distance function, and 
 the comparison theorem of stochastic differential equation. 
\end{proof}

\begin{remark}

 For the backward Ricci flow, the $g(t)$-Brownian motion does not explode \cite{K.P.}, but the condition of the sufficient existence of the Ricci flow
 in complete Riemannian manifolds  as given by Shi \cite{Shi} theorem 1.1, that is the boundedness of the initial Riemannian tensor (for the metric $ g(0)$)
 also gives a bound of the $ Ric $ tensor  along the flow (for bounded time). So the conditions for non explosion of the $g(t) $-Brownian motion given
 in the above proposition
will be satisfied if the initial metric satisfy Shi condition for the complete manifolds.
\end{remark}

\begin{proposition} \label{controleR} Suppose that there exist  $C \in \R $ such that in a matrix sens:
 $$ \Ric_{g(t)}  - \alpha(t) \ge C g(t) .$$
Then  $R_t$ is a martingale, and for $ \beta \ge 1$
$$ \E[R_t^{\beta}] \le e^{\frac12\beta(\beta-1) \frac{d_0^2(x,y)}{T^2}  \frac{1- e^{-Ct}}{C}} .$$ 
Moreover  suppose that there exist  $\tilde{C} \in \R $ such that in a matrix sens: $$ \Ric_{g(t)}  + \alpha(t) \ge \tilde{C} g(t) $$
then  $R^T_t$  is a martingale and for $ \beta \ge 1$
$$ \E[(R^T_t)^{\beta}] \le e^{\frac12\beta(\beta-1) \frac{d_T^2(x,y)}{T^2}  \frac{1- e^{-\tilde{C}t}}{\tilde{C}}} .$$ 
\end{proposition}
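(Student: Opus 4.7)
The plan is to obtain a \emph{deterministic} bound on $\langle N \rangle_t$ in terms of $d_0(x,y)$; this simultaneously verifies Novikov's criterion and yields the claimed moment bound via the standard $\beta$-power trick. First I would identify the quadratic variation of $N_t$. Since $X^0_s(x)$ is a $g(s)$-Brownian motion, its It\^o differential has infinitesimal covariance equal to $g(s)$, and $P_{0,s/T}^{s, X_s(\cdot)}$ is by construction a $g(s)$-isometry, so pairing with the adapted vector $\W(X_{\cdot}(\gamma(s/T)))_s \dot\gamma(s/T)$ gives
$$ \langle N \rangle_t = \frac{1}{T^2} \int_0^t |\W(X_{\cdot}(\gamma(s/T)))_s \dot\gamma(s/T)|^2_{g(s)} \, ds. $$

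Next, from the Stratonovich covariant SDE defining $\W$, a direct computation should yield
$$ \frac{d}{dt} |\W_{0,t} v|^2_{g(t)} = -(\Ric_{g(t)} - \alpha(t))(\W_{0,t} v, \W_{0,t} v). $$
The mechanism is that the $\partial_t g = \alpha$ correction baked into the definition of $\W$ is tuned precisely to absorb the derivative of the moving metric when one computes $\frac{d}{dt}|\W v|^2_{g(t)}$, leaving only $-(\Ric - \alpha)$ as effective drag. Under the hypothesis $\Ric_{g(t)} - \alpha(t) \ge C g(t)$, a Gronwall argument then yields the pointwise estimate $|\W_{0,t} v|^2_{g(t)} \le e^{-Ct} |v|^2_{g(0)}$.

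Because $\gamma$ is a constant-speed $g(0)$-geodesic from $x$ to $y$, we have $|\dot\gamma(s/T)|_{g(0)} = d_0(x,y)$ for every $s$, and combining the last two displays yields
$$ \langle N \rangle_t \le \frac{d_0^2(x,y)}{T^2} \int_0^t e^{-Cs} \, ds = \frac{d_0^2(x,y)}{T^2} \cdot \frac{1 - e^{-Ct}}{C}. $$
This bound is deterministic and finite on $[0,T]$, so Novikov's condition is immediate and $R_t$ is a genuine martingale. For the moment bound I would decompose
$$ R_t^\beta = \exp\bigl(\beta N_t - \tfrac{\beta^2}{2}\langle N \rangle_t\bigr) \cdot \exp\bigl(\tfrac{\beta(\beta-1)}{2}\langle N \rangle_t\bigr), $$
observe that the first factor is itself an exponential martingale of mean one (the same deterministic bound applies to $\beta N$), and pull the second factor out using the pointwise estimate on $\langle N \rangle_t$ just established.

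For $R^T_t$, the plan is to rerun the identical argument with $g(T-t)$ in place of $g(t)$: since $\partial_t g(T-t) = -\alpha(T-t)$, the effective damping term in the analogue of $\W$ becomes $\Ric_{g(T-t)} + \alpha(T-t)$, which by hypothesis dominates $\tilde C g(T-t)$; the connecting geodesic must be taken with respect to the \emph{initial} metric of the reversed flow, namely $g(T)$, which accounts for $d_T(x,y)$ in the final estimate. The only step I consider non-routine is the identity $\frac{d}{dt}|\W v|^2_{g(t)} = -(\Ric-\alpha)(\W v,\W v)$ in the time-dependent metric setting, since one must carefully reconcile the Stratonovich SDE for $(\tpar_{0,t})^{-1}\W_{0,t}$ with the fact that $g(t)$ itself is moving; once that identity is secured, the rest is Gronwall plus the standard exponential-martingale manipulation.
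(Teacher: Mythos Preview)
Your proposal is correct and follows essentially the same route as the paper: bound $\|\W_{0,s}v\|_{g(s)}^2$ by $e^{-Cs}\|v\|_{g(0)}^2$ via the differential identity and Gronwall, plug this into the explicit expression for $\langle N\rangle_t$ (using that $P_{0,s/T}^{s,X_s(\cdot)}$ is a $g(s)$-isometry and $|\dot\gamma|_{g(0)}=d_0(x,y)$), invoke Novikov, and then split $R_t^\beta$ exactly as you do. The paper establishes the key differential identity by pulling back through the isometry $\tpar_s:(T_xM,g(0))\to(T_{X_s(x)}M,g(s))$ so as to differentiate in the fixed metric $g(0)$, which is precisely the careful reconciliation you flag as the only non-routine step.
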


\begin{proof}
 % We will juste make the proof for $R_t $, %the computation is the same for $R^T_t $.
Let $ v \in T_x M $.  Then we use the isometry property of the parallel transport i.e. $\tpar_s : (T_xM,g(0)) \mapsto (T_{X_s(x)}M,g(s)) $ to deduce
 $$\begin{aligned}
   *d \langle \W(X_{.}(x))_s v &, \W(X_{.}(x))_s v  \rangle_{g(s)} =   *d  \langle \tpar^{-1}_s\W(X_{.}(x))_s v, \tpar^{-1}_s \W(X_{.}(x))_s v  \rangle_{g(0)} \\
     &=  2  \langle *d \tpar^{-1}_s\W(X_{.}(x))_s v, \tpar^{-1}_s \W(X_{.}(x))_s v  \rangle_{g(0)} \\
       &= 2  \langle \tpar_s *d \tpar^{-1}_s\W(X_{.}(x))_s v,  \W(X_{.}(x))_s v  \rangle_{g(s)} \\
 &= -  \langle   ( \Ric_{g(s)} - \partial_{s}(g(s)))^{\# g(s)}(\W(X_{.}(x))_{s}v), \W(X_{.}(x))_{s}v \rangle_{g(s)}  \,ds \\ 
&\le - C \parallel \W(X_{.}(x))_{s}v \parallel^2 \, ds
   \end{aligned}
 $$ 
By Gronwall's lemma we get 
$$ \parallel \W(X_{.}(x))_{s}v \parallel_{g(s)}  \le e^{-\frac12 Cs} \parallel v \parallel_{g(0)} $$
Recall  that 
$N_t := - \frac{1}{T} \int_0^t \langle  P_{0,\frac{s}{T}}^{s, X_s( . )} d^{\nabla_s} X_s^0(x) , \W(X_{.}( \gamma (\frac{s}{T})) )_s \dot{\gamma}(\frac{s}{T})   \rangle_{g(s)} $,
and $ P_{0,\frac{s}{T}}^{s, X_s( . )}$ is a $g(s)$ isometry and $ d^{\nabla_s} X_s^0(x)= \tpar_s e_i dw^i$ where $w$ is a $\R^n$-Brownian motion, and $(e_i)_{i=1..n}$
is an orthonormal basis of $T_xM $. Then

$$\begin{aligned}
 \langle N \rangle_t &= \frac{1}{T^2} \int_0^t \parallel \W(X_{.}( \gamma (\frac{s}{T})) )_s \dot{\gamma}(\frac{s}{T}) \parallel^2_{g(s)} \, ds \\
  &\le \frac{1}{T^2} \int_0^t e^{- Cs} \parallel  \dot{\gamma}(\frac{s}{T}) \parallel^2_{g(0)} \, ds \\
 &\le \frac{1}{T^2} d^2_0(x,y) \int_0^t e^{- Cs} \, ds 
\end{aligned}
$$
So by Nokinov's criterion, $R_t $ is a martingale.
Let $\beta \ge 1$,
$$\begin{aligned}
    \E[R_t^{\beta}] &=  \E[ e^{\beta N_t - \frac{\beta}{2} \langle N\rangle_t}]\\
 &= \E[ e^{\beta N_t - \frac{\beta^2}{2} \langle N\rangle_t} e^{ \frac{\beta( \beta-1)}{2} \langle N\rangle_t}] \\
     & \le e^{\frac12\beta(\beta-1) \frac{d_0^2(x,y)}{T^2}  \frac{1- e^{-Ct}}{C}} .
  \end{aligned} $$

By the same computation we have 
 $$\begin{aligned}
 \langle N^T \rangle_t &= \frac{1}{T^2} \int_0^t \parallel \W^T(X_{.}( \gamma (\frac{s}{T})) )_s \dot{\gamma}(\frac{s}{T}) \parallel^2_{g(T-s)} \, ds \\
  &\le \frac{1}{T^2} \int_0^t e^{- \tilde{C} s} \parallel  \dot{\gamma}(\frac{s}{T}) \parallel^2_{g(T)} \, ds \\
   &\le \frac{1}{T^2} d^2_T(x,y) \int_0^t e^{- \tilde{C}s} \, ds.
\end{aligned}
$$
Thus $R^T_t $ is a martingale. Given $\beta \ge 1$ we have similarly,
$$\begin{aligned}
    \E[(R^T_t)^{\beta}] &\le  e^{\frac12\beta(\beta-1) \frac{d_T^2(x,y)}{T^2}  \frac{1- e^{-\tilde{C}t}}{\tilde{C}}} .
  \end{aligned} $$
\end{proof}

\begin{remark}
 In the case of Ricci flow, $ \partial_t g(t) = - Ric_{g(t)} $, then  $ \partial_t g(T-t) =  Ric_{g(T-t)} $ so the process $X_t^T (x) $
 does not explode (we do not need proposition \ref{explosion}, but \cite{K.P.}) and
 the condition of the above proposition is satisfied with  $\tilde{C} =0 $ and $$\E[(R^T_T)^{\beta}] \le  e^{\frac12\beta(\beta-1) \frac{d_T^2(x,y)}{T}  } . $$
\end{remark}

We are now ready to give the Harnack inequality with power. Let $f$ be a solution of \eqref{heat} and let $P_{0,T}$ be the inhomogeneous heat kernel associated to \eqref{heat}, i.e.
$$P_{0,T} f_0(x) := f(T,x) = \E_x[ f_0(X_T^T)]. $$
  
\begin{thm}\label{harnach}
 Suppose that the $g(T-t)$-Brownian motion  $ X_t^T $ does not explode, and that the process $R_t^T $ is a martingale. Then :
 for all $ \alpha > 1$ and $ f_0 \in C_{b}(M)$ we have :

$$ \mid P_{0,T}f_0 \mid^{\alpha}  (x) \le  \E[(R^T_t)^{\frac{\alpha}{\alpha-1}}]^{\alpha-1}  P_{0,T} \mid f_0 \mid ^{\alpha} (y) .$$
Moreover if there exists  $\tilde{C} \in \R $ such that in a matrix sens: $$ \Ric_{g(t)}  + \alpha(t) \ge \tilde{C} g(t)  $$ then  we have:
$$  \mid P_{0,T}f_0 \mid^{\alpha}  (x) \le e^{\frac{\alpha}{2(\alpha-1)} \frac{d_T^2(x,y)}{T^2}  \frac{1- e^{-\tilde{C}T}}{\tilde{C}}} P_{0,T} \mid f_0 \mid ^{\alpha} (y) . $$
\end{thm}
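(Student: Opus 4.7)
The plan is to combine the change-of-measure viewpoint of Proposition~\ref{croise} with Hölder's inequality, and then insert the explicit $L^\beta$-bound for $R^T_t$ from Proposition~\ref{controleR}.

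First, I would pick a minimizing $g(T)$-geodesic $\gamma$ from $x$ to $y$ (so that $d_T(x,y)=\int_0^1\|\dot\gamma(u)\|_{g(T)}\,du$) and apply the construction of Section~1 to the time-reversed family of metrics, producing the coupling process $X_t^T(\gamma(t/T))$ and the exponential martingale $R_t^T$. By Proposition~\ref{croise} (applied to the $g(T-t)$ setting) the law of $X_T^T(\gamma(1))=X_T^T(y)$ under $\Q:=R_T^T\,\P$ equals the law, under $\P$, of an $L_t$-diffusion started at $x$ run until time $T$. Since $P_{0,T}f_0(x)=\E_x[f_0(X_T^T)]$, this identification gives the key identity
$$P_{0,T}f_0(x)=\E_{\Q}\bigl[f_0\bigl(X_T^T(y)\bigr)\bigr]=\E_{\P}\bigl[R_T^T\,f_0\bigl(X_T^T(y)\bigr)\bigr].$$

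Next I would apply Hölder's inequality with conjugate exponents $\alpha$ and $\alpha/(\alpha-1)$:
$$\bigl|P_{0,T}f_0(x)\bigr|\le \E_{\P}\bigl[\bigl|f_0(X_T^T(y))\bigr|^{\alpha}\bigr]^{1/\alpha}\,\E_{\P}\bigl[(R_T^T)^{\alpha/(\alpha-1)}\bigr]^{(\alpha-1)/\alpha}.$$
Raising both sides to the $\alpha$-th power and recognizing the first factor as $P_{0,T}|f_0|^{\alpha}(y)$ yields the first (abstract) inequality of the theorem.

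For the quantitative statement, I would insert the bound of Proposition~\ref{controleR} with $\beta=\alpha/(\alpha-1)$, giving
$$\E_{\P}\bigl[(R_T^T)^{\alpha/(\alpha-1)}\bigr]\le \exp\!\Bigl(\tfrac12\cdot\tfrac{\alpha}{\alpha-1}\cdot\tfrac{1}{\alpha-1}\cdot\tfrac{d_T^2(x,y)}{T^2}\cdot\tfrac{1-e^{-\tilde{C}T}}{\tilde{C}}\Bigr).$$
Raising to the power $\alpha-1$ collapses the two $(\alpha-1)$ factors and gives the stated $\exp\bigl(\frac{\alpha}{2(\alpha-1)}\,\frac{d_T^2(x,y)}{T^2}\,\frac{1-e^{-\tilde{C}T}}{\tilde{C}}\bigr)$.

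The only non-routine point is the first one: justifying that $P_{0,T}f_0(x)=\E_{\P}[R_T^T\,f_0(X_T^T(y))]$ really follows from Proposition~\ref{croise}. One must check that under $\Q$ the terminal value of the coupled diffusion is exactly $X_T^T(y)$ (built from the same underlying Brownian motion as on the $\P$-side), so that the $\Q$-expectation of $f_0$ at the terminal point coincides, after the Radon--Nikodym change, with a $\P$-expectation of $R_T^T f_0(X_T^T(y))$. Once this identification is in place, the inequality is just Hölder followed by the moment estimate already proven.
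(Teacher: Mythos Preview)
Your proposal is correct and follows essentially the same route as the paper: identify $P_{0,T}f_0(x)$ with $\E^{\P}[R_T^T f_0(\tilde X_T^T)]$ via Proposition~\ref{croise}, apply H\"older with exponents $\alpha$ and $\alpha/(\alpha-1)$, recognize $\E^{\P}[|f_0|^{\alpha}(\tilde X_T^T)]=P_{0,T}|f_0|^{\alpha}(y)$ since $\tilde X_T^T=X_T^T(y)$, and then invoke Proposition~\ref{controleR} with $\beta=\alpha/(\alpha-1)$ for the explicit bound. Your added remark about choosing $\gamma$ to be a \emph{minimizing} $g(T)$-geodesic is a welcome clarification, since the paper's statement of Proposition~\ref{controleR} uses $d_T(x,y)$ and this choice is what makes that appear.
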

\begin{proof}
We write $\tilde{X}^T_t := X^T_t (\gamma(\frac{t}{T})) $, and use \ref{croise}, and Holder inequality.
 $$ \begin{aligned}
     \mid P_{0,T}f_0 \mid^{\alpha}  (x)  &= \mid  \E^{\mathbb{Q}}[f_0 (\tilde{X}^T_T) ]\mid^{\alpha}  \\
  &= \mid \E^{\mathbb{P}}[R_T^T f_0 (\tilde{X}^T_T) ] \mid^{\alpha} \\
&\le  \E^{\mathbb{P}}[(R_T^T)^{\frac{\alpha}{\alpha -1 }}  ]^{\alpha -1}   \E^{\mathbb{P}} [\mid f_0 \mid ^{\alpha}(\tilde{X}^T_T) ] \\
 &=  \E^{\mathbb{P}}[(R_T^T)^{\frac{\alpha}{\alpha -1 }}  ]^{\alpha -1}   \E^{\mathbb{P}}_y [\mid f_0 \mid ^{\alpha}(X^T_T) ] \\
 &=  \E^{\mathbb{P}}[(R_T^T)^{\frac{\alpha}{\alpha -1 }}  ]^{\alpha -1} P_{0,T} \mid f_0 \mid ^{\alpha} (y) . \\
    \end{aligned}
$$
The last part in the theorem is an application of  proposition \ref{controleR}.
\end{proof}
We will denote by $\mu_t $ the volume measure associated to the metric $g(t) $, and for $A$ a Borelian, $V_t(A) := \int_A 1 \, d\mu_t $, and $B_t(x,r) $
 the ball for the metric $g(t)$ of center $x$ and radius $r$. .
\begin{corollary}\label{corol}
 Suppose that the hypothesis of theorem \ref{harnach} is satisfied, and that there exist $\tilde{C} \in \R $ such that
  $ \Ric_{g(t)}  + \alpha(t) \ge \tilde{C} g(t) . $
Let  $f_0 \in L^{\alpha} (\mu_0) $. Moreover suppose that there exists a function  $\tau :[ 0,T] \mapsto \R $ such that :
              $$  \frac12 \trace_{g(t)} (\alpha (t)) (y)  \le \tau(t) ,\quad \forall (t,y) \in [0,T]\times M \quad $$
then 
              $$  \mid P_{0,T}f_0   \mid   (x)  \le \frac{e^{\frac{\int_0^T \tau(s)\, ds   + 1}{\alpha}}}{\big( V_T(B_T(x,\sqrt{\frac {2(\alpha -1)T^2}{\alpha (\frac{1- e^{-\tilde{C}T}}{\tilde{C}} )} } ) )   \big)^{\frac{1}{\alpha}}} \parallel f_0\parallel_{L^{\alpha} (\mu_0) } .$$
\end{corollary}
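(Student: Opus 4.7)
The plan is to integrate the power Harnack inequality of Theorem \ref{harnach} over the $g(T)$-ball $B_T(x,r)$ of the right radius and then compare the resulting integral against the $L^\alpha(\mu_0)$ norm of $f_0$ by tracking how the volume element evolves under the geometric flow.

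First I would fix
$$ r := \sqrt{\frac{2(\alpha-1)T^2}{\alpha\,\bigl(\frac{1-e^{-\tilde C T}}{\tilde C}\bigr)}}, $$
chosen precisely so that for any $y\in B_T(x,r)$ the exponential factor in the second conclusion of Theorem \ref{harnach} is bounded by $e^1$. Applying the theorem, for every such $y$,
$$ \lvert P_{0,T}f_0\rvert^\alpha(x)\;\le\; e\cdot P_{0,T}\lvert f_0\rvert^\alpha(y). $$
Integrating this pointwise inequality with respect to $d\mu_T(y)$ over $B_T(x,r)$ and then enlarging the domain to all of $M$ gives
$$ V_T(B_T(x,r))\,\lvert P_{0,T}f_0\rvert^\alpha(x)\;\le\; e\int_M P_{0,T}\lvert f_0\rvert^\alpha(y)\,d\mu_T(y). $$

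The main step is then to control $\int_M P_{0,T}g\,d\mu_T$ for $g=\lvert f_0\rvert^\alpha\ge 0$. For this I would differentiate the mass $F(t):=\int_M P_{0,t}g\,d\mu_t$ in $t$. Since $P_{0,t}g$ solves the forward heat equation \eqref{heat} and $\partial_t d\mu_t=\tfrac12\trace_{g(t)}(\alpha(t))\,d\mu_t$, an application of the divergence theorem makes the Laplacian term vanish and leaves
$$ F'(t)=\tfrac12\int_M P_{0,t}g\cdot\trace_{g(t)}(\alpha(t))\,d\mu_t\;\le\;\tau(t)F(t), $$
using the hypothesis $\tfrac12\trace_{g(t)}(\alpha(t))\le\tau(t)$ and $P_{0,t}g\ge 0$. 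Gronwall then yields $F(T)\le e^{\int_0^T\tau(s)\,ds}\,F(0)=e^{\int_0^T\tau(s)\,ds}\,\lVert f_0\rVert^\alpha_{L^\alpha(\mu_0)}$.

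Combining the two inequalities,
$$ \lvert P_{0,T}f_0\rvert^\alpha(x)\;\le\;\frac{e^{\int_0^T\tau(s)\,ds+1}}{V_T(B_T(x,r))}\,\lVert f_0\rVert^\alpha_{L^\alpha(\mu_0)}, $$
and taking $\alpha$-th roots gives the stated bound. The only non-routine point is the mass-evolution argument in the middle paragraph: one must justify that the integration by parts for $\Delta_t$ is legitimate (which is fine once one cuts off $f_0$, uses the non-explosion hypothesis to approximate by $C_b$ functions, and appeals to the non-negativity of $P_{0,t}g$ when applying Gronwall with an inequality rather than an equality); the rest is a direct substitution.
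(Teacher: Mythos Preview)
Your proposal is correct and follows essentially the same route as the paper: apply the power Harnack inequality of Theorem \ref{harnach}, integrate in $y$ over the $g(T)$-ball of the stated radius (so the exponential factor is $\le e$), enlarge to $M$, and then control $\int_M P_{0,t}|f_0|^\alpha\,d\mu_t$ via the evolution $\partial_t d\mu_t=\tfrac12\trace_{g(t)}(\alpha(t))\,d\mu_t$ together with Stokes/Gronwall, finishing by density of $C_b(M)\cap L^\alpha(\mu_0)$ in $L^\alpha(\mu_0)$. The paper makes the $C_b\cap L^\alpha$ reduction explicit at the outset, whereas you mention it at the end, but the argument is the same.
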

\begin{proof}

 If $f_0 \in C_b(M) \cap L^{\alpha}(\mu_0)$ we apply theorem \ref{harnach} and get :
$$  \mid P_{0,T}f_0 \mid^{\alpha}  (x) \le e^{\frac{\alpha}{2(\alpha-1)} \frac{d_T^2(x,y)}{T^2}  \frac{1- e^{-\tilde{C}T}}{\tilde{C}}} P_{0,T} \mid f_0 \mid ^{\alpha} (y) . $$
We integrate both sides  along the ball $B_T \big(x,\sqrt{\frac {2(\alpha -1)T^2}{\alpha (\frac{1- e^{-\tilde{C}T}}{\tilde{C}} )} } \big) $, with respect to the measure $\mu_t  $, in $y$  and obtain :
 $$\begin{aligned}
  V_T(B_T\big(x,\sqrt{\frac {2(\alpha -1)T^2}{\alpha (\frac{1- e^{-\tilde{C}T}}{\tilde{C}} )} } \big) )  \mid P_{0,T}f_0 \mid^{\alpha}  (x) & \le e \int_{B_T\big(x,\sqrt{\frac {2(\alpha -1)T^2}{\alpha (\frac{1- e^{-\tilde{C}T}}{\tilde{C}} )} } \big)} P_{0,T} \mid f_0 \mid ^{\alpha} (y) \, d\mu_T(y) \\
 &\le  e \int_{M} P_{0,T} \mid f_0 \mid ^{\alpha} (y) \, d\mu_T(y).   
   \end{aligned}
  $$ 
We have that $ \frac{d}{dt} \mu_t(y) = \frac12 \trace_{g(t)} (\alpha (t)) (y) d\mu_t(y) $, and by Stokes theorem we have :
$$
\begin{aligned}
\frac{d}{dt} \int_{M} P_{0,t} \mid f_0 \mid ^{\alpha} (y) \, d\mu_t(y) &=  \int_{M} P_{0,t} \mid f_0 \mid ^{\alpha} (y) \,\frac{d}{dt}  d\mu_t(y) \\
&\le    \tau(t) \int_{M} P_{0,t} \mid f_0 \mid ^{\alpha} (y) \, d\mu_t(y) .  \\
\end{aligned}
 $$
 We deduce that :
$$ \int_{M} P_{0,t} \mid f_0 \mid ^{\alpha} (y) \, d\mu_t(y) \le e^{ \int_0^t \tau(s) \, ds }  \parallel f_0\parallel^{\alpha}_{L^{\alpha} (\mu_0) } .$$
So for $f_0 \in C_b(M) \cap L^{\alpha}(\mu_0)$  we have
 $$  \mid P_{0,T}f_0   \mid   (x)  \le \frac{e^{\frac{\int_0^T \tau(s)\, ds   + 1}{\alpha}}}{\big( V_T(B_T(x,\sqrt{\frac {2(\alpha -1)T^2}{\alpha (\frac{1- e^{-\tilde{C}T}}{\tilde{C}} )} } ) )   \big)^{\frac{1}{\alpha}}} \parallel f_0\parallel_{L^{\alpha} (\mu_0) } , $$
and we conclude by a classical density argument  that the same inequality is true for$f_0 \in L^{\alpha}(\mu_0)$  .
\end{proof}

\begin{corollary}
If the family of metric comes from the Ricci flow and  if 
$$(\tau(t))= - \frac12  \inf_{y \in M}R(t,y)  < \infty , \forall t \in [0,T]$$ then we have 
$$ \mid P_{0,T}f_0   \mid   (x)  \le \frac{e^{\frac{\int_0^T \tau(s) \, ds  + 1}{\alpha}}}{\big( V_T(B_T(x,\sqrt{\frac {2(\alpha -1)T}{\alpha } })  )   \big)^{\frac{1}{\alpha}}} \parallel f_0\parallel_{L^{\alpha} (\mu_0) } .$$
If  $\inf_{x \in M} \big( V_T(B_T(x,\sqrt{\frac {2(\alpha -1)T}{\alpha } })  )   \big) =: C_T > 0  $ then as an linear operator :
$$ \| P_{0,T}  \|_{L^{\alpha}(\mu_0) \mapsto L^{\infty} (\mu_0)} \le \frac{e^{\frac{\int_0^T \tau(s)\, ds   + 1}{\alpha}} }{C_T^{\frac{1}{\alpha}}}$$
\end{corollary}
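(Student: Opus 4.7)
The plan is to specialize Corollary \ref{corol} to the Ricci flow, where $\alpha(t) = -\Ric(g(t))$. First I would observe that the hypothesis $\Ric_{g(t)} + \alpha(t) \ge \tilde{C} g(t)$ is trivially satisfied with $\tilde{C} = 0$, since $\Ric_{g(t)} + \alpha(t) = 0$ identically. The remark following Proposition \ref{controleR} already records that in this situation the $g(T-t)$-Brownian motion is non-explosive (via \cite{K.P.}), $R_t^T$ is a martingale, and $\E[(R_T^T)^\beta] \le \exp\bigl(\tfrac12 \beta(\beta-1) d_T^2(x,y)/T\bigr)$. Hence the hypotheses of Theorem \ref{harnach}, and therefore of Corollary \ref{corol}, are all in place.

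Next I would identify $\tau(t)$. Since $\trace_{g(t)}(\alpha(t))(y) = -\trace_{g(t)}(\Ric_{g(t)})(y) = -R(t,y)$, the prescribed $\tau(t) = -\tfrac12 \inf_{y \in M} R(t,y)$ is exactly an upper bound for $\tfrac12 \trace_{g(t)}(\alpha(t))(y)$ required by Corollary \ref{corol}. To fix the correct radius, I would pass $\tilde{C} \to 0$ in the expression $\sqrt{2(\alpha-1)T^2/(\alpha \cdot (1-e^{-\tilde{C}T})/\tilde{C})}$; since $(1-e^{-\tilde{C}T})/\tilde{C} \to T$, the radius simplifies to $\sqrt{2(\alpha-1)T/\alpha}$. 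Equivalently, one may reinject the $\tilde{C} = 0$ bound from the remark directly into the proof of Theorem \ref{harnach} and redo the integration over the ball of that radius in Corollary \ref{corol}; either route delivers the first claimed pointwise bound on $|P_{0,T}f_0|(x)$.

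For the operator norm statement, I would take the supremum over $x \in M$ on both sides of the pointwise inequality. The numerator is independent of $x$, and by hypothesis the denominator is uniformly bounded below by $C_T^{1/\alpha}$, so
$$ \| P_{0,T}f_0 \|_{L^\infty(\mu_0)} \le \frac{e^{(\int_0^T \tau(s)\, ds + 1)/\alpha}}{C_T^{1/\alpha}} \| f_0 \|_{L^\alpha(\mu_0)}, $$
which is the claimed bound on $\| P_{0,T}\|_{L^\alpha(\mu_0) \to L^\infty(\mu_0)}$. The only mild obstacle is justifying the passage $\tilde{C} \to 0$ in Corollary \ref{corol}; but every term involved extends continuously in $\tilde{C}$, and the endpoint case is anyway controlled by the explicit $\tilde{C} = 0$ estimate on $\E[(R_T^T)^\beta]$ recorded after Proposition \ref{controleR}, so nothing genuinely new needs to be proven.
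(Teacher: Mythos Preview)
Your proposal is correct and matches the paper's approach: the paper states this corollary without proof, treating it as an immediate specialization of Corollary~\ref{corol} with $\tilde{C}=0$ (as recorded in the remark after Proposition~\ref{controleR}) and $\tau(t)=-\tfrac12\inf_{y}R(t,y)$. Your identification of $\tau$, the $\tilde{C}\to 0$ simplification of the radius, and the supremum argument for the operator norm are exactly the intended details.
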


%%%%%%%%%%%%%%%%%%%%%  Dire quelque chose sur l'equation de la chaleur à metrique fixe disons avec une condition Ricci minoré et comparaisons de volume %%%%%%

\section{Non symmetry of the inhomogeneous heat kernel, and heat kernel estimate}
Let $\frac{\partial}{\partial_t} g(t) := \alpha(t) $ where $\alpha $ is a time dependent symmetric $2$-tensor; and consider 
 $L_{t,x}:= -\frac{\partial}{\partial_t} + \frac12 \D_{g(t)} $. Let $x,y \in M  $ and $0<\tau \le \sigma \le t $ and denote by $ P(x,t ,y,\tau) $ 
the fundamental solution of

\begin{equation} \label{heat_kernel}
  \left\{\begin{array}{l} 
   L_ {t,x} P(x,t,y,\tau) =0 \\
\lim_{t \searrow \tau} P(x,t,y,\tau) = \delta_{x}
   \end{array} \right .
\end{equation}
Recall that \cite{metric}:

$$ X^t_t(x) \overset{\mathcal{L}}{=} P(x,t,y,\tau) \, d \mu_0(y) $$

Let $ v, u \in C^{1,2}(\R,M)$, that is differentiable in time, and  differentiable twice in space.
Consider  the adjoint operator  $L^*$ of $L$ with respect to $\langle L u, v  \rangle := \int_0^T \int_M (Lu)v d\mu_t \, dt$.
 It satisfies 
$$L^*_{t,x} = \frac12 \D_t + \frac{\partial}{\partial_t} + \frac12 trace_{g(t)} (\alpha (t)) .$$
The fundamental solution $P^*(y,\tau, x, t) $ of $L^*$, satisfy :

\begin{equation}
  \left\{\begin{array}{l} 
   L^*_ {\tau,y} P^*(y,\tau,x,t) =0 \\
\lim_{\tau \nearrow t } P^*(y,\tau,x,t) = \delta_{y}
   \end{array} \right .
\end{equation}
The adjoint property yields:
 $$ P(x,t,y,\tau) =  P^*(y,\tau,x,t) . $$ 

After a time reversal, $P^*( y , t- s , x, t) $  satisfies the following heat equation :

\begin{equation}
  \left\{\begin{array}{l} 
  \partial_s P^*(y,t - s,x,t) = \frac12 \D_{g(t-s), y } P^*  + \frac12  trace_{g(t-s)} (\alpha (t-s))(y) P^* \\
\lim_{ s\searrow  0 } P^*(y,t-s ,x,t) = \delta_{y} \\
   \end{array} \right .
\end{equation}

Suppose that there exist functions  $  \tau (t)$ and  $  \underline{\tau} (t)$such that :

$$ \frac12 \sup_{y \in M }  trace_{g(t)} (\alpha (t))(y)  \le \tau (t)$$
$$ \frac12 \inf_{y \in M }  trace_{g(t)} (\alpha (t))(y)  \ge \underline{\tau}(t)$$
By Feynman Kac formula, we conclude that : 
%%%%%%%%%%%%%%%%% une petite preuve ici%%%%%%%%%%%%%%%%%%%%%%%%%%%%%%%%%%%%
$$   P^*(y,t - s,x,t) \le e^{\frac12 \int_0^s \tau (t-s+u) du} \overline{P} (y, s, x,t)  .$$   
%%%%%% dire mieu ce qui suit %%%%%%%%%%%%%%%%%%%%%%%%%%%%%%%%%%%
We now fix $t$.
%%%%%%%%%%%%%%%%
Let $\overline{P} (y, s, x,t ) $ be the fundamental solution, defined by :

\begin{equation}
  \left\{\begin{array}{l} 
  \partial_s \overline{P} (y,s,x,t) = \frac12 \D_{g(t-s), y } \overline{P} (y,s,x,t) \\
\lim_{ s\searrow  0 }  \overline{P} (y,s,x,t)  = \delta_{y} \\
   \end{array} \right .
\end{equation}
We have in particular:
%%%%%%%%%% une petite preuve ici (itô)%%%%%%%%%%%%%%%%%%%%%%%%%%%%%%%%%%%%%%%
$$ X^{g(\frac{t}{2}+s)}_{\frac{t}{2}}(y)      \overset{\mathcal{L}}{=}  \overline{P} (y,\frac{t}{2},x,t) \, d \mu_t(x)  $$

\begin{equation} \label{heat-back}
  \left\{\begin{array}{l} 
  \partial_s f(s,x) = \frac12 \D_{g(t-s)} f(s,x)  \\
 f(0,x) = f_0(x) \\
   \end{array} \right .
\end{equation}

\begin{thm} \label{one-diag}
Suppose that the $g(\frac{t}{2} + s)$-Brownian motion does not explode before the time $\frac{t}{2}$ and that there exists $ C \in \R$ such that
 $ \forall s \in[0, \frac{t}{2}]$:
$$ \Ric_{g( s)} - \alpha( s) \ge C g( s). $$
Assume that $g( t- s ) $-Brownian motion  does not explode before the time $\frac{t}{2}$ and that there exist $ \tilde{C} \in \R$ such that $ \forall s \in[0, \frac{t}{2}]$:
$$ \Ric_{g( t- s)} + \alpha( t - s) \ge \tilde{C} g( t-s). $$

Then the fundamental solution of \eqref{heat} that we note $P(x,t,y,0) $ satisfies :
$$ P(x,t,y,0)   \le    e \frac{e^{\frac12 \int_0^t \tau(s)\, ds}}{\Big( V_t(B_t(x,\sqrt{\frac {(\frac{t}{2})^2}{ (\frac{1- e^{-\tilde{C}\frac{t}{2}}}{\tilde{C}} )} } ) )   \Big)^{\frac{1}{2}}}           
\frac{e^{ -\frac12 \int_0^{\frac{t}{2}} \underline{\tau}(s)\, ds}}{\Big( V_{0}(B_{0}(y,\sqrt{\frac {(\frac{t}{2})^2}{ (\frac{1- e^{-C\frac{t}{2}}}{C})}} )  )    \Big)^{\frac{1}{2}}}$$
\end{thm}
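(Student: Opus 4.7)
The plan is a Davies-type ultracontractivity argument: split the kernel at the mid-time $t/2$ via Chapman--Kolmogorov,
\[
P(x,t,y,0)=\int_{M}P(x,t,z,t/2)\,P(z,t/2,y,0)\,d\mu_{t/2}(z),
\]
and control each factor through the $\alpha=2$ instance of Corollary~\ref{corol}. For the second half, apply Corollary~\ref{corol} to the time-shifted flow $\partial_{s}f=\tfrac12\D_{g(t/2+s)}f$ on $s\in[0,t/2]$, whose semigroup is $P_{t/2,t}$ and whose hypotheses $\Ric_{g(t/2+s)}+\alpha(t/2+s)\ge\tilde C\,g(t/2+s)$ and $\tfrac12\trace_{g(t/2+s)}\alpha(t/2+s)\le\tau(t/2+s)$ are exactly the assumptions of the theorem (in shifted time). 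Applied to $f_{0}(z):=P(z,t/2,y,0)$ this yields
\[
P(x,t,y,0)\le\frac{e^{(\int_{t/2}^{t}\tau(s)\,ds+1)/2}}{V_{t}(B_{t}(x,r_{1}))^{1/2}}\,\|P(\cdot,t/2,y,0)\|_{L^{2}(\mu_{t/2})},\qquad r_{1}=\sqrt{\frac{(t/2)^{2}}{(1-e^{-\tilde C t/2})/\tilde C}}.
\]

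\noindent\textbf{First half via time reversal.} Using $P(z,t/2,y,0)=P^{*}(y,0,z,t/2)$ together with the Feynman--Kac comparison already derived in the excerpt,
\[
P^{*}(y,0,z,t/2)\le e^{\frac12\int_{0}^{t/2}\tau(u)\,du}\,\overline P(y,t/2,z,t/2),
\]
reduces the task to bounding $\|\overline P(y,t/2,\cdot,t/2)\|_{L^{2}(\mu_{t/2})}$. For this I apply Corollary~\ref{corol} to the time-reversed metric family $\tilde g(s):=g(t/2-s)$, $s\in[0,t/2]$, with $\tilde\alpha(s)=-\alpha(t/2-s)$. The Ricci hypothesis of Corollary~\ref{corol} now reads $\Ric_{\tilde g(s)}+\tilde\alpha(s)=\Ric_{g(t/2-s)}-\alpha(t/2-s)\ge C\,g(t/2-s)$, which is the first assumption of the theorem, and the trace condition $\tfrac12\trace_{\tilde g(s)}\tilde\alpha(s)\le-\underline\tau(t/2-s)$ integrates to $-\int_{0}^{t/2}\underline\tau(u)\,du$ over $[0,t/2]$. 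The semigroup of the reversed flow is precisely $\tilde f\mapsto\int\overline P(y,t/2,\cdot,t/2)\tilde f\,d\mu_{t/2}$, with $\mu_{\tilde g(0)}=\mu_{t/2}$ and $B_{\tilde g(t/2)}=B_{g(0)}=B_{0}$. Combining Corollary~\ref{corol} with the variational identity $\|K(y,\cdot)\|_{L^{2}(\mu)}=\sup_{\|f\|_{L^{2}(\mu)}\le 1}\int K(y,\cdot)f\,d\mu$ produces
\[
\|\overline P(y,t/2,\cdot,t/2)\|_{L^{2}(\mu_{t/2})}\le\frac{e^{(-\int_{0}^{t/2}\underline\tau(u)\,du+1)/2}}{V_{0}(B_{0}(y,r_{2}))^{1/2}},\qquad r_{2}=\sqrt{\frac{(t/2)^{2}}{(1-e^{-Ct/2})/C}}.
\]

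\noindent\textbf{Combination and main difficulty.} Multiplying the three estimates, the exponents combine as $\tfrac12\int_{t/2}^{t}\tau+\tfrac12\int_{0}^{t/2}\tau=\tfrac12\int_{0}^{t}\tau$ and the two $e^{1/2}$ contributions collapse into the single $e$ of the statement, producing exactly the claimed bound. The main technical obstacle is the first-half argument: one must correctly set up the reversed family $\tilde g$ and recast Corollary~\ref{corol} in that setting, verifying that the Ricci lower bound of the theorem plays the role of ``$\tilde C$'' with constant $C$, that $-\underline\tau$ is a valid trace upper bound for $\tilde\alpha$, and that the final volume and ball on the reversed side indeed reduce to those of $g(0)$, while checking that the two non-explosion hypotheses of the theorem cover the Brownian motions actually employed in both the Feynman--Kac comparison and the reversed corollary.
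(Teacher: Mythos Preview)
Your proposal is correct and follows essentially the same route as the paper: Chapman--Kolmogorov at time $t/2$, then the $\alpha=2$ case of Corollary~\ref{corol} applied to the shifted flow $g(t/2+\cdot)$ for the factor $P(x,t,\cdot,t/2)$ and to the reversed flow $g(t/2-\cdot)$ for $\overline P(y,t/2,\cdot,t/2)$, combined with the duality $P=P^{*}$ and the Feynman--Kac comparison $P^{*}\le e^{\frac12\int_0^{t/2}\tau}\,\overline P$. The only cosmetic differences are that the paper writes the first step as an explicit Cauchy--Schwarz on $\int P\cdot P^{*}\,d\mu_{t/2}$ (equivalent to your $L^{2}\to L^{\infty}$ bound by duality) and handles the a~priori $L^{2}$-integrability of the kernels by truncating with $f_{0}=P\wedge(n\,\mathbf{1}_{B(x_{0},n)})$ before letting $n\to\infty$, which is exactly the content of your ``variational identity'' remark; you should make that truncation step explicit when applying Corollary~\ref{corol} to $f_{0}=P(\cdot,t/2,y,0)$, since the corollary is stated for $f_{0}\in L^{\alpha}$.
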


\begin{proof}
By the Chapman Kolmogorov formula we have:
$$\begin{aligned}
  P(x,t,y,0) &= \int_M P(x,t,z,\frac{t}{2})     P(z,\frac{t}{2},y,0)         \, d\mu_{\frac{t}{2}}(z) \\
             &= \int_M P(x,t,z,\frac{t}{2})     P^*(y,0,z,\frac{t}{2})         \, d\mu_{\frac{t}{2}}(z) \\
             &\le \big( \int_M  (P(x,t,z,\frac{t}{2})    )^2   d\mu_{\frac{t}{2}}(z)  \big  )^{\frac12} 
 \big( \int_M  (  P^*(y,0,z,\frac{t}{2})   )^2  d\mu_{\frac{t}{2}}(z)   \big)^{\frac12} .\\
\end{aligned}$$
Recall that $ P(x,\frac{t}{2}+s ,z,\frac{t}{2})$ is the fundamental solution, that starts at $\delta_x $ at time $s=0$, of :

\begin{equation} 
  \left\{\begin{array}{l} 
  \partial_s f(s,x) = \frac12 \D_{g( \frac{t}{2}  + s)} f(s,x)  \\
 f(0,x) = f_0(x) \\
   \end{array} \right .
\end{equation}
Then we have : $$  P_{0,\frac{t}{2}}f_0(x) := f_0(\frac{t}{2},x)= \E[ f_0 (X^{t- .}_{\frac{t}{2}}(x)  )]   $$
According to the proof of  corollary \ref{corol}, we get that for $f_0 \in C_b(M) \cap L^{2}(\mu_{\frac{t}{2}})$ :
 $$  \mid P_{0,\frac{t}{2}}f_0   \mid   (x)  \le  \frac{e^{\frac{\int_0^{\frac{t}{2}}  \tau(\frac{t}{2}+s)\, ds   + 1}{2}}}{\big( V_t(B_t(x,\sqrt{\frac {(\frac{t}{2})^2}
{ (\frac{1- e^{-\tilde{C}(\frac{t}{2})}}{\tilde{C}} )} } ) )   \big)^{\frac{1}{2}}} \parallel f_0\parallel_{L^{2} (\mu_{\frac{t}{2}}) } . $$

Given $ x_0 \in M $ and $n \in \mathbb{N} $, we apply the above inequality to $ f_0(y) := P(x, t ,y,\frac{t}{2}) \wedge (n II_{B (x_0 , n)}(y) ) $ to obtain :
$$
\begin{aligned}
&  \int_{M} P(x, t ,z,\frac{t}{2}) (P(x, t ,z,\frac{t}{2})  \wedge  n II_{B (x_0 , n)} (z)) \, d\mu_{\frac{t}{2}} (z) \\
& \int_{M} \big( P(x, t ,z,\frac{t}{2}) \wedge (n II_{B (x_0 , n)} (z)) \big)^2 \, d\mu_{\frac{t}{2}} (z) \\
&\le   \frac{e^{\frac{\int_0^{\frac{t}{2}}  \tau(\frac{t}{2}+s)\, ds   + 1}{2}}}{\big( V_t(B_t(x,\sqrt{\frac {(\frac{t}{2})^2}
{ (\frac{1- e^{-\tilde{C}(\frac{t}{2})}}{\tilde{C}} )} } ) )   \big)^{\frac{1}{2}}}    \Big( 
\int_{M} \big( P(x, t ,z,\frac{t}{2}) \wedge (n II_{B (x_0 , n)} (z)) \big)^2 \, d\mu_{\frac{t}{2}} (z) \Big)^{\frac{1}{2}} . \\
\end{aligned} 
$$ 
Letting  $ n$ goes to infinity, we obtain that the heat kernel is twice integrable, and that:
$$
\begin{aligned}
\Big( 
\int_{M} \big( P(x, t ,z,\frac{t}{2}) \big)^2 \, d\mu_{\frac{t}{2}} (z) \Big)^{\frac{1}{2}} 
\le \frac{e^{\frac{\int_0^{\frac{t}{2}}  \tau(\frac{t}{2}+s)\, ds   + 1}{2}}}{\big( V_t(B_t(x,\sqrt{\frac {(\frac{t}{2})^2}
{ (\frac{1- e^{-\tilde{C}(\frac{t}{2})}}{\tilde{C}} )} } ) )   \big)^{\frac{1}{2}}} .\\
\end{aligned}
$$

Recall that:
$$   P^*(y,0,x,\frac{t}{2}) \le e^{\frac12 \int_0^{\frac{t}{2}} \tau (u) du} \overline{P} (y, \frac{t}{2}, x, \frac{t}{2})  ,$$
where $\overline{P} (y, \frac{t}{2}, x, \frac{t}{2}) $ is the heat kernel at time $\frac{t}{2} $, that start at time $ 0 $ at $ \delta_y$, of the following equation:

\begin{equation} 
  \left\{\begin{array}{l} 
  \partial_s f(s,x) = \frac12 \D_{g(\frac{t}{2}-s)} f(s,x)  \\
 f(0,x) = f_0(x) \\
   \end{array} \right .
\end{equation}

We also  have: $$ \overline{P} _{0,\frac{t}{2}}f_0(x) := f_0(\frac{t}{2},x)= \E[ f_0 (X^{ g(.)}_{\frac{t}{2}}(x)  )]   $$
Here the family of metric is $s \mapsto g(\frac{t}{2} -s) $, so  many changes of sign are involved. However, the proof of the following  is the same as the
 proof of corollary \ref{corol}. We get for $f_0 \in C_b(M) \cap L^{2}(\mu_{\frac{t}{2}})$ :
 $$  \mid \overline{P} _{0,\frac{t}{2}}f_0   \mid   (y)  \le  \frac{e^{\frac{-\int_0^{\frac{t}{2}}  \underline{\tau}(s)\, ds   + 1}{2}}}{\big( V_0(B_0(y,\sqrt{\frac {(\frac{t}{2})^2}
{ (\frac{1- e^{-C(\frac{t}{2})}}{C} )} } ) )   \big)^{\frac{1}{2}}} \parallel f_0\parallel_{L^{2} (\mu_{\frac{t}{2}}) } . $$
Similarly we can show the square integrability of the kernel and the following inequality:

$$
\begin{aligned}
\Big( 
\int_{M} \big( \overline{P}(y, \frac{t}{2}  ,z,\frac{t}{2}) \big)^2 \, d\mu_{\frac{t}{2}} (z) \Big)^{\frac{1}{2}} 
\le \frac{e^{\frac{-\int_0^{\frac{t}{2}}  \underline{\tau}(s)\, ds   + 1}{2}}}{\big( V_0(B_0(y,\sqrt{\frac {(\frac{t}{2})^2}
{ (\frac{1- e^{-C(\frac{t}{2})}}{C} )} } ) )   \big)^{\frac{1}{2}}} .\\
\end{aligned}
$$
We obtain : 
$$
\begin{aligned}
& \big( \int_M  (  P^*(y,0,z,\frac{t}{2})   )^2  d\mu_{\frac{t}{2}}(z)   \big)^{\frac12} \\
&\le  e^{\frac12 \int_0^{\frac{t}{2}} \tau (u) du}  \big( \int_M    \overline{P}^2 (y, \frac{t}{2}, z, \frac{t}{2})  \, d\mu_{\frac{t}{2}} (z) \Big)^{\frac{1}{2}} \\
&\le  e^{\frac12 \int_0^{\frac{t}{2}} \tau (u) du}   \frac{e^{\frac{-\int_0^{\frac{t}{2}}  \underline{\tau}(s)\, ds   + 1}{2}}}{\big( V_0(B_0(y,\sqrt{\frac {(\frac{t}{2})^2}
{ (\frac{1- e^{-C(\frac{t}{2})}}{C} )} } ) )   \big)^{\frac{1}{2}}}  .\\
\end{aligned}
$$

\end{proof}
\begin{remark}
 Having a heat kernel estimate for the heat equation we have simultaneously a kernel estimate of conjugate equation.

\end{remark}

\begin{remark}
If $g(t)=g(0)$ is constant, and $Ric_{g(0)} \ge 0$ we have $\tau(t) = \underline{\tau}(t) =0$, $ C =\tilde{C}=0$ and we deduce Li Yau one diagonal estimate of the usual heat equation
on complete manifolds:

$$ P_t(x,y)   \le    e \frac{1}{\big( V_0(B_0(x,\sqrt{ \frac{t}{2} } ) )   \big)^{\frac{1}{2}}}           
\frac{1}{\big( V_{0}(B_{0}(y, \sqrt{ \frac{t}{2} })  )    \big)^{\frac{1}{2}}}$$

\end{remark}

\begin{remark} \label{rmq-Ric}
 
 For the Ricci flow, in dimension $3$,  there is a result of Hamilton that  says : if the Ricci curvature is non negative at time $0$,
 it is still non negative for all time before the critical time. 
In arbitrary dimension $n$,  if we  suppose that the Ricci curvature is positive at all times, we have the following one diagonal estimate (use the above theorem \ref {one-diag} with $ C = 0 $ and $ \tilde {C} =0 $)  :
$$ P(x,t,y,0)   \le    e \frac{e^{\frac12 \int_0^t \tau(s)\, ds}}{\big( V_t(B_t(x,\sqrt{ \frac{t}{2}} ) )   \big)^{\frac{1}{2}}}           
\frac{e^{ -\frac12 \int_0^{\frac{t}{2}} \underline{\tau}(s)\, ds}}{\big( V_{0}(B_{0}(y,\sqrt{\frac{t}{2}} ))      \big)^{\frac{1}{2}}}.$$

 Recall that in the case of Ricci flow : $ \tau(s) = - \frac12 \inf_{M} R(s,.) $ and $ \underline{\tau}(s)= - \frac12 \sup_{M} R(s,.)$ so we have  :
$$ P(x,t,y,0)   \le    e \frac{e^{-\frac14 \int_{\frac{t}{2}}^{t}  \inf_{M} R(s,.) \, ds}}{\big( V_t(B_t(x,\sqrt{ \frac{t}{2}} ) )   \big)^{\frac{1}{2}}}           
\frac{e^{ \frac14 \int_0^{\frac{t}{2}} \big( \sup_{M} R(s,.) - \inf_{M} R(s,.)  \big) \, ds}}{\big( V_{0}(B_{0}(y,\sqrt{\frac{t}{2}} ))      \big)^{\frac{1}{2}}}.$$
\end{remark}

\section{Grigor'yan tricks, one diagonal estimate to Gaussian estimate, the Ricci flow case}

 In this section we  use the one diagonal estimate of  the previous section to derive a Gaussian type estimate of the heat kernel coupled with Ricci flow 
(for complete manifold with non negative Ricci curvature)
 . The proof involves in several steps. In particular, we  use a modification of  Grigor'yan tricks to control exponential integrability of the square of the heat kernel,
combined to an  adapted version of
Hamilton entropy estimate to control the difference of the heat kernel at two points.
This type of strategy, is a modification of different  arguments which  appears in the literature on the Ricci flow ( Hamilton, Lei Ni , Cao-Zhang).
 Unfortunately, we have not been able to cover  the case of general manifold (without assumption of non negativity of the Ricci curvature). 

We start this section by the following entropy estimate.
\begin{lemma} \label{Hamilton}
 Let $f$ a positive solution of \eqref{heat}, where $ \alpha_{i,j}(t)= - (\Ric_{g(t)})_{i,j}$ ,   $t \in [0, ..T_c[$ and $M_{\frac{t}{2}} :=  \sup_{x \in M} f(\frac{t}{2},x)$  then for all $x,y \in M$ 
$$ f(t,x) \le \sqrt{f(t,y)} \sqrt{M_{\frac{t}{2}}} e^{\frac{d^2_t(x,y)} {t}} . $$
\end{lemma}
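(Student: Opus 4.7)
The presence of the sup $M_{t/2}$ at the midpoint time, combined with the factor $1/t$ in the exponent, strongly suggests a Hamilton-type gradient estimate for $\log f$ on the interval $[t/2,t]$ (with ``initial'' sup $M_{t/2}$), followed by integration along a minimising $g(t)$-geodesic from $x$ to $y$. My plan has two steps: (i) prove that $|\nabla \log f|^2_{g(t)}(t,\cdot) \le \tfrac{4}{t}\log(M_{t/2}/f(t,\cdot))$; (ii) turn this pointwise gradient bound into the claimed Harnack-type inequality by a one-dimensional geodesic integration.

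For (i), set $h = \log f$. From $\partial_t f = \tfrac12 \D_{g(t)} f$ one gets $(\partial_t-\tfrac12 \D_{g(t)})h = \tfrac12|\nabla h|^2_{g(t)}$. The key feature of the Ricci flow coupling is visible on $|\nabla h|^2$: Bochner produces a $+\Ric(\nabla h,\nabla h)$ term in $\D|\nabla h|^2$, while $\partial_t g^{ij} = \Ric^{ij}$ (since $\alpha = -\Ric$) produces the same $+\Ric(\nabla h,\nabla h)$ term in $\partial_t |\nabla h|^2$, and after taking $\partial_t - \tfrac12\D$ the two cancel exactly:
$$(\partial_t-\tfrac12 \D_{g(t)})|\nabla h|^2_{g(t)} = -|\mathrm{Hess}\,h|^2_{g(t)} + \langle \nabla |\nabla h|^2_{g(t)}, \nabla h\rangle_{g(t)}.$$
No curvature hypothesis on $g(t)$ is required. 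Introduce $\tau := s - t/2 \in [0,t/2]$ and the auxiliary function $P(\tau,x) := \tau |\nabla h|^2_{g(t/2+\tau)}(x) + 2\bigl(h(t/2+\tau,x) - \log M_{t/2}\bigr)$. The scalar maximum principle applied to $f$ gives $f(s,\cdot) \le M_{t/2}$ for $s \ge t/2$, hence $P(0,\cdot) \le 0$. A short computation using the two heat equations above, combined with the relation $\tau \nabla |\nabla h|^2 = -2\nabla h$ at a spatial maximum of $P$, yields $(\partial_\tau - \tfrac12 \D_{g(t/2+\tau)})P = -\tau|\mathrm{Hess}\,h|^2 \le 0$ at that point, so the parabolic maximum principle gives $P \le 0$ throughout $[0,t/2]$. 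Evaluating at $\tau = t/2$ reads $\tfrac{t}{2}|\nabla \log f|^2_{g(t)}(t,\cdot) \le 2\log(M_{t/2}/f(t,\cdot))$, which is (i).

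For (ii), introduce $u(z) := \sqrt{\log(M_{t/2}/f(t,z))}$, well defined and non-negative by the maximum principle. Estimate (i) rewrites as $|\nabla u|_{g(t)} = |\nabla \log f|_{g(t)}/(2u) \le 1/\sqrt{t}$. Integrating this bound along a minimising $g(t)$-geodesic from $x$ to $y$ gives $u(y) \le u(x) + d_t(x,y)/\sqrt{t}$. Squaring and using the elementary inequality $2ab \le a^2 + b^2$ yields $u(y)^2 \le 2u(x)^2 + 2d_t^2(x,y)/t$, i.e.\ $\log(M_{t/2}/f(t,y)) \le 2\log(M_{t/2}/f(t,x)) + 2d_t^2(x,y)/t$. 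Exponentiating and rearranging gives $f(t,x)^2 \le M_{t/2}\, f(t,y)\, e^{2d_t^2(x,y)/t}$, and taking a square root is exactly the claim. The main obstacle is step (i), the Hamilton-type gradient estimate in the Ricci flow setting: the cancellation of the Ricci terms is the cleanest ingredient, but on a non-compact manifold the maximum principle must be justified by a cut-off argument using the $g(t)$-distance (in the Li--Yau / Hamilton tradition), together with the implicit assumption that $f$ remains bounded (so that $M_{t/2}$ is finite); once (i) is in hand, the remaining geodesic integration is essentially algebraic.
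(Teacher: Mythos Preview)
Your argument is correct and reaches the same gradient estimate
\[
|\nabla \log f|^2_{g(t)} \le \tfrac{4}{t}\log\bigl(M_{t/2}/f(t,\cdot)\bigr),
\]
after which your step (ii) is line-for-line the paper's geodesic integration. The genuine difference is in how step (i) is obtained.

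The paper works with the quantities $\|\nabla f\|^2/f$ and $f\log f$ (rather than your $|\nabla\log f|^2$ and $\log f$), establishes via Weitzenb\"ock that $(-\partial_t+\tfrac12\Delta_{g(t)})(\|\nabla f\|^2/f)\ge 0$ and $(-\partial_t+\tfrac12\Delta_{g(t)})(f\log f)=\tfrac12\|\nabla f\|^2/f$, and then runs a \emph{stochastic} maximum principle: it forms
\[
N_s=\tfrac{t/2-s}{2}\,\frac{\|\nabla f\|^2}{f}(t-s,X^t_s)+(f\log f)(t-s,X^t_s)
\]
along a $g(t-s)$-Brownian motion $X^t_s$, shows $N_s$ is a sub-martingale by It\^o's formula, and compares $\mathbb{E}[N_0]$ with $\mathbb{E}[N_{t/2}]$; the martingale property of $f(t-s,X^t_s)$ and the bound $f\log f\le f\log M_{t/2}$ (after the normalisation $f>1$) then give the gradient estimate. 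Your route is the classical PDE one: you build $P=\tau|\nabla\log f|^2+2\log(f/M_{t/2})$ and apply the parabolic maximum principle directly, using $\nabla P=0$ at a spatial maximum to absorb the drift term.

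What each buys: the paper's stochastic formulation fits its global theme and, crucially, sidesteps the non-compact maximum principle issue, since non-explosion of the $g(t-s)$-Brownian motion under Ricci flow has already been secured earlier in the paper; thus no cut-off is needed. Your approach is more self-contained and avoids the $f>1$ normalisation (since $\log(M_{t/2}/f)\ge 0$ is automatic from the scalar maximum principle), but, as you note, requires a Li--Yau-type cut-off to justify the maximum principle when $M$ is non-compact. Either way the output is identical.
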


\begin{proof}

By the homogeneity of the desired inequality under multiplication by a constant, and the homogeneity  of the heat equation under the same operation, we can suppose that $ f > 1 $, in the proof.

 Using an orthonormal frame and Weitzenbock formula, we have the following  identity:
 $$ ( -  \partial_t + \frac12 \D_{g(t)})   \big(  \frac{ \parallel \triangledown  f  \parallel ^{2} }{ f } \big) (x,t) =  \frac{1}{ f} \Big(
\parallel Hess f - \frac{\triangledown^{t} f \otimes \triangledown f}{f} \parallel^{2}_{HS}     +  (\Ric_t + \dot{g}) ( \triangledown {f};\triangledown {f} )       \Big)(x,t).$$
Thus,  in the case of Ricci flow we get :
$$ ( -  \partial_t + \frac12 \D_{g(t)})   \Big(  \frac{ \parallel \triangledown  f  \parallel ^{2} }{ f } \Big) \ge 0 . $$
By a direct computation we have :
$$ ( -  \partial_t + \frac12 \D_{g(t)})  ( f \log f ) (x,t) = \frac12 \frac{ \parallel \triangledown  f  \parallel ^{2} }{ f }(x,t).$$
Let $$ N_s :=  h(s)\big(  \frac{ \parallel \triangledown  f  \parallel ^{2} }{ f } \big) (t-s, X_s^t(x))\big) +  (f log f) (t-s, X_s^t(x)),$$ where $ X_s^t(x)$ is a $g(t-s)$-Brownian motion started at $x$.
If  $h(s) := \frac{t/2 -s}{2} $ then by It\^o formula, it is easy to see that $N_s$ is a super-martingale. So we have :
$$ \E[ N_0] \le \E[N_ {\frac{t}{2}}],$$ that is :
$$ \begin{aligned}
\frac{t}{4}  \frac{ \parallel \triangledown  f  \parallel ^{2} }{ f } (t,x) + (f \log f) (t,x) &\le  \E[ (f \log f) (\frac{t}{2} ,X_{\frac{t}{2}}^t(x) )   ] \\
                        &\le   \E[ f (\frac{t}{2} ,X_{\frac{t}{2}}^t(x) ) ] \log ( M_{\frac{t}{2}} ) \\
  &= f(t,x)  \log ( M_{\frac{t}{2}} ). \\
\end{aligned}$$ 
Where we have used that $ f >1 $ and that $ f(t-s,X_s^t(x) ) $ is a  martingale. The above computation yields:
$$ \frac{ \parallel \triangledown  f  \parallel  }{ f} (t,x) \le \frac{2}{\sqrt{t}} \log( \frac{ M_{\frac{t}{2}}}{ f(t,x)} ),$$  
and consequently:
$$  \parallel \triangledown  \sqrt{log (\frac{M_{\frac{t}{2}}}{f(x,t)} )  }\parallel_{t}   \le \frac{1}{\sqrt{t}}.$$ 
After integrating this inequality along a $g(t)$ geodesic  between $x$ and $y$, we get :
$$ 
\begin{aligned}
\sqrt{\log (\frac{M_{\frac{t}{2}}}{f(y,t)} ) } &\le \sqrt{\log (\frac{M_{\frac{t}{2}}}{f(x,t)} ) } + \frac{d_t(x,y)}{\sqrt{t}}, \\
\end{aligned}$$ 
that is 
$$ 
\begin{aligned}
f(t,x) \le \sqrt{f(t,y)} \sqrt{M_{\frac{t}{2}}} e^{\frac{d^2_t(x,y)} {t}} . 
\end{aligned}$$

\end{proof}

 Now, we adapt   the argument of Grigor\'yan 
%{ Gaussian upper bound ....}
 to the situation of Ricci flow (with non negative Ricci curvature).
 We begin  by recalling  Remark \ref{rmq-Ric}. 
 The assumption of the positivity of the Ricci curvature  gives $R(x,s) \ge 0$. Let $\Psi(t) := e^{\frac14 \int_0^{t}  \sup_{M} R(s,.) \, ds} $
 so the estimate in \ref{rmq-Ric} becomes :
 $$P(x,t,y,0)   \le    e \frac{\Psi(t)}{\big( V_t(B_t(x,\sqrt{ \frac{t}{2}} ) )   \big)^{\frac{1}{2}} \big( V_{0}(B_{0}(y,\sqrt{\frac{t}{2}} ))      \big)^{\frac{1}{2}}   }           
. $$ 

 \begin{lemma}
  Suppose that the family of  metrics $ g(t)$ comes from the Ricci flow, and let $B$ be a Borelian in $ M$.  Then:
    $$ \frac{1}{ V_t(B)^{\frac12}} \le   \frac{ \Psi(t) }{ V_0(B)^{\frac12}}                .   $$
Moreover if  $\Ric_{g( . )}  \ge 0 $ then for all $ x,y \in M$ and $r >0 $ we have :
 $$ \frac{1}{V_0(B_t(x,r) )^{\frac12}} \le \frac{1}{V_0(B_0(x,r) )^{\frac12}} .$$
 \end{lemma}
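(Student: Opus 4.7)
The plan is to reduce both inequalities to routine consequences of the evolution equation for the Riemannian volume measure under the Ricci flow and the monotonicity of the metric tensor when $\Ric_{g(\cdot)}\ge 0$.

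For the first inequality I would begin from the general volume evolution formula already recalled in the proof of Corollary \ref{corol}, namely $\frac{d}{dt} d\mu_t = \frac12 \trace_{g(t)}(\alpha(t))\, d\mu_t$. In the Ricci flow case $\alpha(t)= -\Ric_{g(t)}$, so $\trace_{g(t)}(\alpha(t)) = -R(t,\cdot)$, where $R$ denotes the scalar curvature. Integrating in $t$ yields the pointwise identity $d\mu_t(z) = e^{-\frac12 \int_0^t R(s,z)\, ds}\, d\mu_0(z)$, equivalently $d\mu_0 = e^{\frac12 \int_0^t R(s,\cdot)\, ds}\, d\mu_t$. Integrating this identity over $B$ and bounding the exponential pointwise by $e^{\frac12 \int_0^t \sup_M R(s,\cdot)\, ds} = \Psi(t)^2$ produces $V_0(B) \le \Psi(t)^2 V_t(B)$, which rearranges to the claimed inequality.

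For the second inequality I would exploit the assumption $\Ric_{g(\cdot)}\ge 0$. Since $\partial_t g(t) = -\Ric_{g(t)} \le 0$, the metric is nonincreasing in $t$ as a symmetric $2$-tensor, so $g(t) \le g(0)$ for $t\ge 0$. Consequently the $g(t)$-length of every curve is bounded above by its $g(0)$-length; taking infima over curves joining $x$ to $y$ gives $d_t(x,y) \le d_0(x,y)$ for every $x,y\in M$. It follows that $B_0(x,r) \subseteq B_t(x,r)$, and integrating $d\mu_0$ over this inclusion yields $V_0(B_0(x,r)) \le V_0(B_t(x,r))$, which is the desired bound after inversion.

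I do not anticipate a real obstacle: both assertions are direct manipulations of identities already established in the paper. The only point requiring mild care is bookkeeping with signs, namely checking that the bound on the exponential indeed produces $\Psi(t)^2$ with a $\sup$ of $R$ (and not an $\inf$), and ensuring that the monotonicity $g(t)\le g(0)$ is applied in the correct direction so that the inclusion of balls goes the right way. Nonnegativity of Ricci is only needed for the second inequality; the first holds for the Ricci flow without further curvature assumption.
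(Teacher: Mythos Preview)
Your proposal is correct and follows essentially the same approach as the paper: the paper also invokes the volume evolution $\frac{d}{dt}\mu_t = -\tfrac12 R(\cdot,t)\mu_t$ and integrates to obtain the first inequality, and for the second observes that $\Ric_{g(\cdot)}\ge 0$ makes $d_t(x,y)$ nonincreasing in $t$, hence $B_0(x,r)\subset B_t(x,r)$. Your write-up is, if anything, slightly more explicit about the bookkeeping with $\Psi(t)^2$ and the inclusion of balls.
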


\begin{proof}
 A simple computation  computation shows that:
$$ \frac{d}{dt} \mu_t = \frac12 \trace_{g(t)} (\dot{g}(t)) \mu_t.$$
In the case of a Ricci flow this becomes $ \frac{d}{dt} \mu_t(dx) = - \frac12 R(x,t) \mu_t(dx).$ Thus, the first inequality of the lemma follows from a direct integration.
For the second point, it's clear that $\Ric \ge 0 $ yields that $d_t (x,y)$ is non increasing in time.
So $ B_0(x,r) \subset B_t(x,r) $, which clearly gives $ \frac{1}{V_0(B_t(x,r) )^{\frac12}} \le \frac{1}{V_0(B_0(x,r) )^{\frac12}} $.
\end{proof}

The above lemma immediately yields the following remark.

\begin{remark} \label{rem_3}
 If $\Ric_{g(.)} \ge 0$ , and $ \dot(g)(t) = - \Ric_{g(t)} $ then  we have:
 $$P(x,t,y,0)   \le    e \frac{\Psi(t)^2}{\big( V_0(B_0(x,\sqrt{ \frac{t}{2}} ) )   \big)^{\frac{1}{2}} \big( V_{0}(B_{0}(y,\sqrt{\frac{t}{2}} ))      \big)^{\frac{1}{2}}   }           
. $$
\end{remark}

\begin{proposition} \label{prop_3_1}
Let $g(t)$ be a solution of Ricci flow such that $\Ric_g(t) \ge 0 $, and  $r >0$ , $t_0>t \ge 0$, $A \ge 1 $.
Let also :
$$
  \xi(y,t)  =  \begin{cases}                 
 \frac{-(r - d_t(x,y))^2}{A (t_0-t)}      \quad  & if \quad d_t(x,y)\le r \\
  0  \quad  & if \quad  d_t(x,y)\ge r
               \end{cases}
$$
and $\Lambda (t) = \int_0^t \inf_{x \in M} (R(s,x)) ds $. 
Then for $f(t,x)$ a solution of \eqref{heat} we have for  $ t_2 < t_1 < t_0$:

$$\int_{M} f^2(t_1,y) e^{\xi(y,t_1) } \mu_{t_1} (dy )  \le e^{-(\Lambda (t_1)-\Lambda (t_2))}  \int_{M} f^2(t_2,y) e^{\xi(y,t_2) } \mu_{t_2} (dy )  . $$

\end{proposition}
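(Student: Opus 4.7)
The plan is a weighted energy estimate in the spirit of Grigor'yan. Setting $\Phi(t):=\int_M f^2(t,\cdot)\,e^{\xi(\cdot,t)}\,d\mu_t$, the goal is to show $\Phi'(t)\le -\tfrac12\inf_M R(t,\cdot)\,\Phi(t)$ and then conclude by Gronwall's lemma. Differentiating $\Phi$ yields three contributions: the heat evolution $\partial_t f=\tfrac12\D_{g(t)} f$ from \eqref{heat}, the explicit $t$-dependence of $\xi$, and the Ricci-flow volume derivative $\partial_t\,d\mu_t=-\tfrac12 R\,d\mu_t$. Integration by parts on the first contribution gives
$$ \int_M f\D_{g(t)} f\,e^\xi\,d\mu_t = -\int_M |\nabla f|^2\, e^\xi\,d\mu_t - \int_M f\langle\nabla f,\nabla\xi\rangle\, e^\xi\,d\mu_t , $$
and the Young inequality $|f\langle\nabla f,\nabla\xi\rangle|\le |\nabla f|^2+\tfrac14 f^2|\nabla\xi|^2$ absorbs the $|\nabla f|^2$ term, leaving
$$ \Phi'(t) \le \int_M f^2\bigl(\partial_t\xi+\tfrac14|\nabla\xi|^2\bigr)\,e^\xi\,d\mu_t - \tfrac12\int_M f^2 R\,e^\xi\,d\mu_t . $$

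The heart of the argument is the pointwise bound $\partial_t\xi+\tfrac14|\nabla\xi|^2\le 0$ for $A\ge 1$. On $\{d_t(x,y)<r\}$, using $|\nabla d_t(x,\cdot)|=1$ one computes $|\nabla\xi|^2 = 4(r-d_t)^2/(A^2(t_0-t)^2)$ and
$$ \partial_t\xi = \frac{2(r-d_t)\,\partial_t d_t}{A(t_0-t)} - \frac{(r-d_t)^2}{A(t_0-t)^2}. $$
The crucial input is $\Ric_{g(t)}\ge 0$, which under Ricci flow forces $\partial_t d_t(x,y)\le 0$: the first variation of arc length along a minimizing $g(t)$-geodesic $\gamma$ gives $\partial_t L_t(\gamma) = -\tfrac12\int_\gamma\Ric(\gamma',\gamma')\,ds\le 0$ (since $\dot g=-\Ric$), and this transfers to the distance by a standard envelope argument. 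The mixed term in $\partial_t\xi$ is therefore non-positive, and adding the two estimates yields $\partial_t\xi+\tfrac14|\nabla\xi|^2 \le (1/A-1)(r-d_t)^2/(A(t_0-t)^2)\le 0$ precisely when $A\ge 1$. On $\{d_t\ge r\}$, $\xi$ and its weak derivatives vanish and the bound is automatic.

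Combining gives $\Phi'(t) \le -\tfrac12\inf_M R(t,\cdot)\,\Phi(t)$, and Gronwall integrated from $t_2$ to $t_1$ produces the claimed exponential factor in $\Lambda(t_1)-\Lambda(t_2)$. The main obstacle is regularity: $d_t(x,\cdot)$ is only Lipschitz and is smooth merely off the cut locus of $x$, so the pointwise identities for $\nabla\xi$ and $\partial_t\xi$ hold only almost everywhere, and the integration by parts must be justified by a smooth cutoff approximation of $\xi$ or by the distributional form of the Laplacian comparison. The inequality $\partial_t d_t\le 0$ must likewise be interpreted as an upper-derivative bound at cut points. These reductions are routine, and once dispatched the rest of the argument is a clean PDE calculation.
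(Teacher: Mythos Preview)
Your argument is exactly the ``direct computation'' the paper alludes to (the paper's own proof is a single sentence), and it is carried out correctly: differentiate the weighted energy, integrate by parts, absorb the cross term via Young's inequality, and then use the pointwise bound $\partial_t\xi+\tfrac14|\nabla\xi|^2\le 0$, whose two ingredients are precisely $A\ge 1$ and the monotonicity $\partial_t d_t\le 0$ coming from $\Ric_{g(t)}\ge 0$. Your discussion of the cut-locus and Lipschitz regularity of $d_t(x,\cdot)$ is appropriate and does not affect the outcome, since the relevant identities hold almost everywhere and the weight $\xi$ is globally Lipschitz with compactly supported gradient.

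One minor bookkeeping point: with the paper's conventions $\partial_t g=-\Ric$ and $\partial_t f=\tfrac12\Delta_{g(t)}f$, the volume form evolves as $\partial_t\,d\mu_t=-\tfrac12 R\,d\mu_t$, so your differential inequality reads $\Phi'(t)\le -\tfrac12\inf_M R(t,\cdot)\,\Phi(t)$. Gronwall then yields
\[
\Phi(t_1)\ \le\ \exp\!\Big(-\tfrac12\int_{t_2}^{t_1}\inf_M R(s,\cdot)\,ds\Big)\,\Phi(t_2)
\ =\ e^{-\frac12(\Lambda(t_1)-\Lambda(t_2))}\,\Phi(t_2),
\]
with a factor $\tfrac12$ in the exponent rather than the stated $e^{-(\Lambda(t_1)-\Lambda(t_2))}$. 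Since $\Ric\ge 0$ forces $R\ge 0$, the bound you actually prove is formally the weaker one; this looks like a harmless normalization slip in the paper's definition of $\Lambda$, and it has no effect on the qualitative Gaussian estimates downstream. You may simply flag this constant and proceed.
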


\begin{proof}
 By direct computation and using intensively that $ \Ric_{g(t)} \ge 0 $.
\end{proof}

Let $$
\begin{aligned}
  I_r(t) & :=  \int_{M \setminus B_t(x,r)} f^2(t,y)\mu_{t} (dy ) \\  
\end{aligned}.
 $$

\begin{proposition}\label{prop_3_2}
 Under the same assumptions as in the above proposition, and  if, $\rho < r $  we have:
$$
\begin{aligned}
  I_r(t_1) & \le e^{-(\Lambda (t_1)-\Lambda (t_2))} \big( I_{\rho}(t_2) + e^{\frac{-(r-\rho)^2}{ A(t_1 - t_2)}   }\int_{M} f^2(t_2,y) \mu_{t_2} (dy )   \big) \\  
\end{aligned}.
 $$
\end{proposition}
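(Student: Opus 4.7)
The plan is to apply Proposition \ref{prop_3_1} to the Grigor'yan-type weight $\xi$, exploiting both its non-positivity and the fact that it vanishes on $M\setminus B_t(x,r)$, and then to split the resulting integral at time $t_2$ according to whether $y$ lies inside or outside $B_{t_2}(x,\rho)$.

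First I would upgrade $I_r(t_1)$ to a weighted integral by inserting the $\xi$-weight at time $t_1$. Since $\xi(\cdot,t_1)\le 0$ with equality precisely on the region over which $I_r(t_1)$ is integrated, we have
$$I_r(t_1) = \int_{M\setminus B_{t_1}(x,r)} f^2(t_1,y)\, e^{\xi(y,t_1)}\,\mu_{t_1}(dy) \le \int_M f^2(t_1,y)\, e^{\xi(y,t_1)}\,\mu_{t_1}(dy),$$
and by Proposition \ref{prop_3_1} this last integral is bounded by $e^{-(\Lambda(t_1)-\Lambda(t_2))} \int_M f^2(t_2,y)\, e^{\xi(y,t_2)}\,\mu_{t_2}(dy)$.

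Next I would split this integral at time $t_2$ into the pieces $\{d_{t_2}(x,y)\ge\rho\}$ and $\{d_{t_2}(x,y)<\rho\}$. On the first piece the pointwise bound $e^{\xi(y,t_2)}\le 1$ contributes at most $I_\rho(t_2)$. On the second piece the strict inequality $\rho<r$ gives $r-d_{t_2}(x,y)>r-\rho>0$, whence $\xi(y,t_2)\le -(r-\rho)^2/(A(t_0-t_2))$; pulling this constant out leaves the contribution $e^{-(r-\rho)^2/(A(t_0-t_2))} \int_M f^2(t_2,y)\,\mu_{t_2}(dy)$.

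The one subtlety is that the exponent produced this way involves $t_0-t_2$, whereas the statement asks for $t_1-t_2$. Since the left-hand side $I_r(t_1)$ is independent of the auxiliary parameter $t_0>t_1$ entering the definition of $\xi$, the clean remedy is to let $t_0\searrow t_1$: the initial estimate $I_r(t_1)\le \int f^2\, e^{\xi(\cdot,t_1)}\,\mu_{t_1}$ survives the limit because it only relied on $e^{\xi}\le 1$, while the exterior contribution on $\{d_{t_2}\ge\rho\}$ is unaffected and the interior contribution sharpens monotonically to the advertised $e^{-(r-\rho)^2/(A(t_1-t_2))}$ factor. I expect this book-keeping around $t_0$ to be the only real obstacle, since the genuine analytic content (an energy-type inequality for $f^2 e^{\xi}$ against the Ricci-flow measure) has already been packaged into Proposition \ref{prop_3_1}.
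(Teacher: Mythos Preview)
Your proposal is correct and follows essentially the same route as the paper: insert the weight $e^{\xi(\cdot,t_1)}$ (which equals $1$ on $M\setminus B_{t_1}(x,r)$ and is $\le 1$ elsewhere), apply Proposition~\ref{prop_3_1}, split the time-$t_2$ integral at radius $\rho$, bound $\xi$ by $-(r-\rho)^2/(A(t_0-t_2))$ on $B_{t_2}(x,\rho)$, and finally let $t_0\searrow t_1$ using that $I_r(t_1)$ is independent of $t_0$. The only cosmetic difference is that the paper phrases the last step simply as ``$I_r(t_1)$ is independent of $t_0$, so pass to the limit in the final inequality'', whereas you discuss the limiting behaviour of the intermediate steps; but the logic is the same.
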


\begin{proof}
  $$
\begin{aligned}
  I_r(t_1) & :=  \int_{M \setminus B_{t_1}(x,r)} f^2(t_1,y)\mu_{t_1} (dy ) \\
         & \le   \int_{M \setminus B_{t_1}(x,r)} f^2(t_1,y)  e^{\xi(y,t_1) }   \mu_{t_1} (dy ) \\
         & \le   \int_{M} f^2(t_1,y)  e^{\xi(y,t_1) }   \mu_{t_1} (dy ) \\
         & \le  e^{-(\Lambda (t_1)-\Lambda (t_2))}  \int_{M} f^2(t_2,y)  e^{\xi(y,t_2) }   \mu_{t_2} (dy ) \\
         & \le  e^{-(\Lambda (t_1)-\Lambda (t_2))}  \Big( \int_{B_{t_2}(x,\rho)} f^2(t_2,y)  e^{\xi(y,t_2) }   \mu_{t_2} (dy ) \\
 &   \quad + \int_{M \setminus B_{t_2}(x,\rho)} f^2(t_2,y)  e^{\xi(y,t_2) }   \mu_{t_2} (dy )\Big) \\
         &\le e^{-(\Lambda (t_1)-\Lambda (t_2))}  \Big( I_{\rho}(t_2) + \int_{B_{t_2}(x,\rho)} f^2(t_2,y)  e^{\xi(y,t_2) }   \mu_{t_2} (dy ) \Big) \\
 & \le e^{-(\Lambda (t_1)-\Lambda (t_2))}  \Big( I_{\rho}(t_2) + e^{\frac{-(r-\rho)^2}{ A(t_0 - t_2)}   }\int_{M} f^2(t_1,y)  \mu_{t_2} (dy ) \\
\end{aligned}.
 $$

Then remark that the definition of $I_r(t) $ is independent of $ t_0$ and of the corresponding $ \xi$, so we can pass to the limit when $ t_0 \searrow t_1$ in the following :
$$  I_r(t_1) \le    e^{-(\Lambda (t_1)-\Lambda (t_2))}  \Big( I_{\rho}(t_2) + e^{\frac{-(r-\rho)^2}{ A(t_0 - t_2)}   }\int_{M} f^2(t_1,y)  \mu_{t_2} (dy ) $$ to obtain the desired result. 
\end{proof}

We  apply the above proposition to the heat kernel $P(x,t,y,0) $ of the equation \eqref{heat_kernel} that also satisfy \eqref{heat}.
\begin{thm}
If $ \dot{g} (t) = - Ric_{g(t)}$ and $ \Ric_{g(t)} \ge 0$, and the following technical assumption is satisfied for the initial manifold
\begin{itemize}
\item H1 : if $M$ is not compact, we suppose that there exists a uniform constant $c_n > 0$ such that $Vol(B_{g(0)}(x,r)) \ge c_n r^n $ (that is a non collapsing condition )
 %\item H2 : the diameter of $ (M ,g(0))$ is infinite (that is we can find tow point that are far as we want,  )
 \item H2 : all the curvature tensor is bounded for the metric $(M,g(0)$.
\end{itemize}

Then for all $ a>1$  there exist  two  positive explicit constants $q_a $, $m_a $ depending only on $a$ and on the dimension,
 such that we have the following heat kernel estimate :
$$P(y_0,t,x_0, 0)    \le q_a\frac{ e^{ \int_0^{t} \frac58  \sup_{M} R(u,.) - \frac14 \inf_{M} R(u,.)   du }         }     
                    {\big( V_0(B_0(x_0,\sqrt{ t} ) )   \big)^{\frac{1}{2}}    V_0( B_0(y_0, \sqrt{t}))^{\frac{1}{2}}    } e^{- \frac{ d_t(x_0,y_0)^2}{  m_{a} t  } } . $$ 
%The hypothesis H2 could be remove if we are interested by the above estimate only for small time)
\end{thm}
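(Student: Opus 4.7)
The plan is to combine the on-diagonal bound of Remark~\ref{rem_3} with the weighted $L^2$ localization of Proposition~\ref{prop_3_2}, and then pass from integrated to pointwise via the entropy inequality of Lemma~\ref{Hamilton}; this is the natural adaptation to the Ricci flow of the classical Grigor'yan--Saloff-Coste route from on-diagonal to Gaussian heat kernel bounds.

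Set $f(t,y) := P(y,t,x_0,0)$, a nonnegative solution of \eqref{heat} with $f(0,\cdot) = \delta_{x_0}$. From Remark~\ref{rem_3} and the non-collapsing assumption H1 (i.e.\ $V_0(B_0(z,r)) \ge c_n r^n$) one obtains an explicit upper bound for $M_{t/2} := \sup_z f(t/2,z)$ in terms of $\Psi(t/2)^2$, $V_0(B_0(x_0,\sqrt{t/4}))^{-1/2}$, and a negative power of $t$. Applying Lemma~\ref{Hamilton} to $f$ at time $t$ and raising to the fourth power, then integrating in $y$ over $B_t(y_0,\rho)$ with $\rho := \alpha\, d_t(x_0,y_0)$ for some $\alpha \in (0,1)$ to be chosen, and using the inclusion $B_t(y_0,\rho) \subset M \setminus B_t(x_0,(1-\alpha)d_t(x_0,y_0))$, yields
$$ f(t,y_0)^4 \le \frac{M_{t/2}^2 \, e^{4\alpha^2 d_t^2(x_0,y_0)/t}}{V_t(B_t(y_0,\alpha d_t(x_0,y_0)))}\, I_{(1-\alpha)d_t(x_0,y_0)}(t). $$

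The Gaussian decay of $I_r(t)$ in $r$ is obtained by iterating Proposition~\ref{prop_3_2} backward along a sequence $t = t_0 > t_1 > \cdots > t_K = t/2$ with radii $r = r_0 > r_1 > \cdots > r_K = 0$. At each stage the full $L^2$-norm is controlled by $\|f(t_j,\cdot)\|_{L^2(\mu_{t_j})}^2 \le M_{t_j}\int f\,d\mu_{t_j} \le M_{t/2}$, using that $\int f\,d\mu_t$ is non-increasing along the Ricci flow when $R \ge 0$ and that $M_{t_j} \le M_{t/2}$ for $t_j \ge t/2$. The prefactors $e^{-(\Lambda(t_{k})-\Lambda(t_{k+1}))}$ telescope to $e^{-(\Lambda(t)-\Lambda(t/2))}$, and a suitable balance between $r_k-r_{k+1}$ and $t_k-t_{k+1}$ produces
$$ I_r(t) \le \kappa_a \, e^{-(\Lambda(t)-\Lambda(t/2))}\, M_{t/2} \, e^{-r^2/(A_a t)}, $$
for constants $\kappa_a, A_a > 0$ depending only on $a>1$ and the dimension.

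Substituting this with $r=(1-\alpha)d_t(x_0,y_0)$, taking the fourth root, and choosing $\alpha$ so that $4\alpha^2 - (1-\alpha)^2/A_a < -1/m_a$ delivers the Gaussian factor $e^{-d_t^2(x_0,y_0)/(m_a t)}$. One then converts $V_t(B_t(y_0,\alpha d_t(x_0,y_0)))^{1/4}$ into $V_0(B_0(y_0,\sqrt{t}))^{1/2}$ using $B_0 \subset B_t$ (because $\Ric \ge 0 \Rightarrow d_t \le d_0$), the evolution $d\mu_t \ge e^{-\frac12\int_0^t \sup_M R\,ds}\,d\mu_0$, and Bishop--Gromov on $(M,g(0))$ to pass from radius $\alpha d_t(x_0,y_0)$ to radius $\sqrt{t}$. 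Collecting the exponential contributions coming from $M_{t/2}^{1/2}$, from the telescoped $\Lambda$-prefactor raised to $1/4$, and from the volume evolution, one recovers the coefficient $\tfrac58 \sup_M R - \tfrac14 \inf_M R$ of the announced exponent. The main difficulty is the quantitative Grigor'yan iteration: the finite number $K$ of steps and the radius/time partition must be tuned so as to produce pure Gaussian decay in $r$ while keeping the $M_{t_j}$-blow-up controlled by stopping at $t_K = t/2$; this interplay is precisely what generates, for each $a>1$, the explicit constants $\kappa_a, A_a$, and ultimately $q_a$ and $m_a$. The remaining parts are mechanical bookkeeping of exponential factors and volume comparisons.
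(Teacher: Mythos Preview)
Your overall plan---on-diagonal bound plus Grigor'yan iteration plus Hamilton's entropy lemma---is the right one and matches the paper's architecture. But the iteration you describe has a genuine gap.

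You iterate Proposition~\ref{prop_3_2} over a \emph{finite} chain $t=t_0>t_1>\cdots>t_K=t/2$ with $r_K=0$. Unrolling the recursion gives
\[
I_{r}(t)\;\le\;e^{-(\Lambda(t)-\Lambda(t/2))}\Big[\,I_{0}(t/2)\;+\;\sum_{j=0}^{K-1}e^{-\frac{(r_j-r_{j+1})^{2}}{A(t_j-t_{j+1})}}\,\|f(t_{j+1},\cdot)\|_{L^2(\mu_{t_{j+1}})}^{2}\Big].
\]
The sum can indeed be tuned to decay like $e^{-cr^2/t}$, and your bound $\|f(t_{j+1},\cdot)\|_{L^2}^2\le M_{t/2}$ is correct for $t_{j+1}\ge t/2$. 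But the residual term $I_{0}(t/2)=\|f(t/2,\cdot)\|_{L^2(\mu_{t/2})}^2$ carries \emph{no} Gaussian weight in $r$; it is a fixed positive constant of order $M_{t/2}$. Hence the inequality
\[
I_r(t)\le \kappa_a\,e^{-(\Lambda(t)-\Lambda(t/2))}\,M_{t/2}\,e^{-r^2/(A_a t)}
\]
cannot follow from this scheme: the right-hand side tends to $0$ as $r\to\infty$ while your actual upper bound does not. No finite (or even infinite) partition of $[t/2,t]$ repairs this, because the limit of the residual is $I_{r_\infty}(t/2)>0$.

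The paper avoids this by taking the Grigor'yan sequence all the way down, $t_k=t/a^k\to 0$ with $r_k\to r/2$, and then arguing that $I_{r_k}(t_k)\to 0$ as $k\to\infty$ via a short-time exit estimate $\mathbb P_x(\tau_{r/2}<t_k)\le C e^{-r/(4t_k)}$ (this is where H1 is actually used, to control the global sup of the kernel). The price is that the $L^2$ norms $\|f(t_{k+1},\cdot)\|_{L^2}^2$ at the small times are no longer uniformly bounded by $M_{t/2}$; instead they are bounded by the on-diagonal estimate $\lesssim V_0(B_0(x_0,\sqrt{t_{k+1}}))^{-1}$, and Bishop--Gromov converts this into a factor $(c_a)^{k+1}$ which is beaten by the super-exponential Gaussian term $\exp\!\big(-a^{k+1}r^2/(A(a-1)(k+3)^4 t)\big)$. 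That is precisely ``the interplay'' you allude to in your last paragraph, but stopping at $t/2$ destroys it. Once you iterate to $t_k\to 0$ and handle the vanishing of the residual, the rest of your argument (using Lemma~\ref{Hamilton} to pass from $I_r$ to a pointwise bound at $y_0$, and the volume bookkeeping) goes through essentially as in the paper, though the paper uses the mean-value variant---pick one $z_0\in B_t(y_0,\sqrt{t/4})$ where $P^2\le I_r/V_t$---rather than integrating the fourth power.
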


\begin{proof}
Let $ f(t,x) := P(x,t,y,0) $ be the heat kernel of \eqref{heat} that is the solution  of  equation \eqref{heat_kernel}. Then we have by the proof of theorem  \ref{one-diag}:
$$ 
\begin{aligned}
 \int_{M} f^2(t,x)  \mu_{t} (dx) &= \int_{M}  P^2(x,t,y,0) \mu_{t} (dx) \\
                                  &= \int_{M}  P^{*2}(y,0,x,t) \mu_{t} (dx) \\
                                  & \le e  \frac{e^{\int_0^{t} \tau (u) - \underline{\tau}(u)\, du }} {\big( V_0(B_0(y,\sqrt{t})) \big)}\\
                                  &= e  \frac{e^{\int_0^{t}  \sup_{M} R(u,.) - \inf_{M} R(u,.)   du }} {\big( V_0(B_0(y,\sqrt{t})) \big)}\\
\end{aligned}.
$$ 
 Let $ 0< \rho < r $ , $A \ge 1 $ and $ t_2 < t_1 < t_0$ then apply proposition \ref{prop_3_2} to $ f(t,x) := P(x,t,y,0) $, to get :
$$
\begin{aligned}
  I_r(t_1) & \le e^{-(\Lambda (t_1)-\Lambda (t_2))} \big( I_{\rho}(t_2) + e^{\frac{-(r-\rho)^2}{ A(t_1 - t_2)}   }\int_{M} f^2(t_2,y) \mu_{t_2} (dy )   \big) \\  
           & \le   e^{-(\Lambda (t_1)-\Lambda (t_2))} \big( I_{\rho}(t_2) + e e^{\frac{-(r-\rho)^2}{ A(t_1 - t_2)}   }    \frac{e^{\int_0^{t_2}  \sup_{M} R(u,.) - \inf_{M} R(u,.)   du }} {\big( V_0(B_0(y,\sqrt{t_2})) \big)}     \big) \\ 
\end{aligned}.
 $$

Let $ a > 1$ be a constant. Following Gregorian   we define :
$r_k := ( \frac{1}{2} + \frac{t}{k+2})r$
and $ t_k := \frac{1}{a^k}$.

Thus proposition  \ref{prop_3_2} can be applied to $r_{k+1} < r_k $  and $t_{k+1} < t_k  $, yielding to the same estimate as before : 
$$
\begin{aligned}
  I_{r_k}(t_k) & \le e^{-(\Lambda (t_k)-\Lambda (t_{k+1}))} \big( I_{r_{k+1}}(t_{k+1}) + e  e^{\frac{-( r_k- r_{k+1} )^2}{ A(t_k- t_{k+1})}   } 
 \frac{e^{\int_0^{t_{k+1}}  \sup_{M} R(u,.) - \inf_{M} R(u,.)   du }} {\big( V_0(B_0(y,\sqrt{t_{k+1}}))  \big) } \big) \\  
   & \le e^{-(\Lambda (t_k)-\Lambda (t_{k+1}))} \big( I_{r_{k+1}}(t_{k+1}) + e  e^{\frac{-( r_k- r_{k+1} )^2}{ A(t_k- t_{k+1})}   } 
 \frac{e^{\int_0^{t_{k+1}}  \sup_{M} R(u,.) du  - \Lambda (t_{k+1}) }} {\big( V_0(B_0(y,\sqrt{t_{k+1}}))  \big) } \big) \\     
\end{aligned}.
 $$
Applying  recursively this inequality, we have for all $k$ :
\begin{equation}
\begin{aligned} \label{summ}
  I_{r_0}(t_0) & \le e^{-(\Lambda (t_0)-\Lambda (t_{k+1}))}I_{r_{k+1}}(t_{k+1}) + e   \sum_{i=0}^{k}   e^{-\Lambda (t_0)}  e^{\frac{-( r_i- r_{i+1} )^2}{ A(t_i- t_{i+1})}   } 
 \frac{e^{\int_0^{t_{i+1}}  \sup_{M} R(u,.)  du }} {\big( V_0(B_0(y,\sqrt{t_{i+1}}))  \big) }  \\     
      & \le  e^{-\Lambda (t_0)-\Lambda (t_{k+1}))}I_{r_{k+1}}(t_{k+1}) + e  e^{-\Lambda (t_0)}  e^{\int_0^{t_{0}}  \sup_{M} R(u,.)    du }    \sum_{i=0}^{k}     e^{\frac{-( r_i- r_{i+1} )^2}{ A(t_i- t_{i+1})}   } 
 \frac{1}{\big( V_0(B_0(y,\sqrt{t_{i+1}}))  \big) }  \\     
\end{aligned}, 
 \end{equation}
 
where in the last inequality we use that $Ric_{g(t)} \ge 0 $ so $R(x,t) \ge 0 $.

We have $ \lim_{k \longrightarrow \infty }I_{r_{k}}(t_{k}) = 0 $, by  Levis asymptotic of heat kernel. 
 This can be seen, using a probabilistic argument: 
 use that for small $t$, $\P_x (\tau_r < t) \le C e^{-\frac {r}{2t}} $, where $\tau_r := \inf {t >0, d_t(X_t (x),x) = r} $, and
$$
\begin{aligned}
 \int_{(M \setminus B_t(x,r))} P^2(x,t,y,0)\mu_{t} (dy ) & \le  \frac{cst}{t^{\frac{n}{2}}}  \int_{(M \setminus B_t(x,r))} P(x,t,y,0)\mu_{t} (dy )\\ 
         &  \le cst  \frac{\P_x (\tau_r < t)}{ t^{\frac{n}{2}}}  \\
         &  \le cst  \frac{ e^{-\frac {r}{2t}}}{ t^{\frac{n}{2}}}  \\
\end{aligned}
$$
and the right hand side goes to $0$ when $t$ goes to $0$. (we used  H1 to get a  global bound of the heat kernel i.e. remark \ref{rem_3}). 

So we can pass to the limit when $k$ goes to infinity in equation \eqref{summ} to get :
$$  I_{r_0}(t_0)  \le e  e^{-\Lambda (t_0)}  e^{\int_0^{t_{0}}  \sup_{M} R(u,.)    du }    \sum_{i=0}^{\infty}     e^{\frac{-( r_i- r_{i+1} )^2}{ A(t_i- t_{i+1})}   } 
 \frac{1}{\big( V_0(B_0(y,\sqrt{t_{i+1}}))  \big) }  .$$

Recall that $r_i - r_{i+1} = \frac{r}{(i+3)(i+2)} $ and $t_i - t_{i+1} = \frac{t}{a^i} (1 - \frac1a )$. 
Also By Bishop-Gromov theorem in the case $\Ric \ge 0$ we have  
$$\frac{V_0(B_0(y,\sqrt{t_{i}})) }{V_0(B_0(y,\sqrt{t_{i+1}})) } \le a^{\frac{n}{2}} := c_a .$$
Iterating  the above inequality we get : 
$$\frac{V_0(B_0(y,\sqrt{t_{0}})) }{V_0(B_0(y,\sqrt{t_{i+1}})) } \le (c_a)^{i+1} . $$ 
So we have :
$$
\begin{aligned}
I_{r_0}(t_0)  &\le e \frac{ e^{-\Lambda (t_0)}  e^{\int_0^{t_{0}}\sup_{M} R(u,.)    du  }}{V_0(B_0(y,\sqrt{t_{0}}))  }     \sum_{i=0}^{\infty}     e^{\frac{-( r_i- r_{i+1} )^2}{ A(t_i- t_{i+1})}   }  (c_a)^{i+1} \\
&\le e \frac{ e^{-\Lambda (t_0)}  e^{\int_0^{t_{0}}\sup_{M} R(u,.)    du  }}{V_0(B_0(y,\sqrt{t_{0}}))  }     \sum_{i=0}^{\infty}  
 e^{\frac{-(\frac{r}{(i+3)(i+2)})^2}{ A(\frac{t}{a^i} (1 - \frac1a ))} + (i+1) log(c_a)  }   \\
&\le  e \frac{ e^{-\Lambda (t_0)}  e^{\int_0^{t_{0}}\sup_{M} R(u,.)    du  }}{V_0(B_0(y,\sqrt{t_{0}}))  }     \sum_{i=0}^{\infty}  
 e^{\frac{- a^{i+1} r^2}{ A t_0  (a-1) (i+3)^4 }           + (i+1) log(c_a)  .}   \\
\end{aligned}
$$
There exists a constant $m_{(a,A)}$ such that $ \frac{a^{i+1} }{  A (a-1) (i+3)^4 } \ge m_{(a,A)} (i+2)$, and thus we get :
$$
\begin{aligned}
I_{r_0}(t_0)  &\le e \frac{ e^{-\Lambda (t_0)}  e^{\int_0^{t_{0}}\sup_{M} R(u,.)    du  }}{V_0(B_0(y,\sqrt{t_{0}}))  } \sum_{i=0}^{\infty}  
 e^{\frac{- m_{(a,A)} r^2}{  t_0  }  (i+2)         + (i+1) log(c_a)  }   \\
&\le e \frac{ e^{-\Lambda (t_0)}  e^{\int_0^{t_{0}}\sup_{M} R(u,.)    du  }}{V_0(B_0(y,\sqrt{t_{0}}))  }  e^{\frac{- m_{(a,A)} r^2}{  t_0  }} \sum_{i=0}^{\infty}  
 e^{ -(i+1)            (   \frac{ m_{(a,A)} r^2}{  t_0  }-  log(c_a) ) } .\\
\end{aligned}
$$
If  $\frac{ m_{(a,A)} r^2}{  t_0  }-  log(c_a) )  \ge log(2)$ then  
$$ I_{r_0}(t_0)  \le e \frac{ e^{-\Lambda (t_0)}  e^{\int_0^{t_{0}}\sup_{M} R(u,.)    du  }}{V_0(B_0(y,\sqrt{t_{0}}))  }  e^{\frac{- m_{(a,A)} r^2}{  t_0  }}, $$
If $\frac{ m_{(a,A)} r^2}{  t_0  }-  log(c_a) )  < log(2)$ then
$$\begin{aligned}
 I_{r_0}(t_0)  &\le \int_{M}  P^2(x,t_0,y,0) \mu_{t_0} (dx) \\
                &= \int_{M}  P^{*2}(y,0,x,t_0) \mu_{t_0} (dx) \\
                & \le e  \frac{e^{\int_0^{t_0}  \sup_{M} R(u,.) - \inf_{M} R(u,.)   du }} {\big( V_0(B_0(y,\sqrt{t_0})) \big)}\\
                 & \le e  \frac{e^{\int_0^{t_0}  \sup_{M} R(u,.) - \inf_{M} R(u,.)   du }} {\big( V_0(B_0(y,\sqrt{t_0})) \big)}  e^{log(2) +log(c_a) - \frac{ m_{(a,A)} r^2}{  t_0  } }  . \\
\end{aligned}$$

Take $A=1$,  we have that for all $a>1$ there exists a constant $q_a := 2ea^{\frac{n}{2}}$ and $m_a:= m_{(a,1)} $ such that :
\begin{equation} \label{eqI}
 I_{r}(t) \le q_a \frac{e^{\int_0^{t}  \sup_{M} R(u,.) - \inf_{M} R(u,.)   du }}{\big( V_0(B_0(y,\sqrt{t_0})) \big)} e^{- \frac{ m_{a} r^2}{  t  } } 
\end{equation}

Let $ x_0, y_0 \in M$ such that $d_t(x_0,y_0) \ge \sqrt{t}$, let $ r := \frac{d_t(x_0,y_0)}{2}$, then by \eqref{eqI} %%%%%%%%%%%%%%%%%%%%%%%%%%%%%%%%%
(with $I_r(t)$ defined with $ f(t,x) =  P^2( x ,t,x_0, 0) $ ),  there exists
$z_0 \in B_t(y_0, \sqrt{\frac{t}{4}}) \subset M \backslash B_t(x_0, r)$ such that :
$$\begin{aligned}
 V_t( B_t(y_0, \sqrt{\frac{t}{4}}) P^2(z_0,t,x_0, 0) &\le I_r(t)\\
                 &\le q_a \frac{e^{\int_0^{t}  \sup_{M} R(u,.) - \inf_{M} R(u,.)   du }}{\big( V_0(B_0(x_0,\sqrt{t})) \big)} e^{- \frac{ m_{a} d_t(x_0,y_0)^2}{  4t  } } \\
\end{aligned}.$$

So there exists $z_0 \in B_t(y_0, \sqrt{\frac{t}{4}})$ such that:
$$ P^2(z_0,t,x_0, 0) \le q_a \frac{e^{\int_0^{t}  \sup_{M} R(u,.) - \inf_{M} R(u,.)   du }}{\big( V_0(B_0(x_0,\sqrt{t})) V_t( B_t(y_0, \sqrt{\frac{t}{4}})\big)} e^{- \frac{ m_{a} d_t(x_0,y_0)^2}{  4t  } }  $$

We denote $q_a$ a constant that depends only of the parameter $a$ and the dimensions, that possibly changes from  line by line. 
By the above lemma (comparison of volume) we have :

$$P(z_0,t,x_0, 0) \le q_a \psi(t) \frac{e^{\frac12 \int_0^{t}  \sup_{M} R(u,.) - \inf_{M} R(u,.)   du }}{\sqrt{\big( V_0(B_0(x_0,\sqrt{t})) V_0( B_0(y_0, \sqrt{\frac{t}{4}}))\big)}} e^{- \frac{ m_{a} d_t(x_0,y_0)^2}{  8t  } }  $$

We conclude the proof by using lemma \ref{Hamilton} (for $ f(t,x) := P(x,t,x_0, 0)$)  to compare the solution of the heat equation at different point. We have :

$$
\begin{aligned}
P(y_0,t,x_0, 0)  & \le \sqrt{P(z_0,t,x_0, 0)} \sqrt{ \sup_{M}  P(. ,\frac{t}{2},x_0, 0)} e^{\frac{d_t(z_0,y_0)^2}{t} }\\ 
                 & \le \sqrt{P(z_0,t,x_0, 0)} \sqrt{ \sup_{M}  P(. ,\frac{t}{2},x_0, 0)} e^{\frac14}\\ 
                 & \le q_a \sqrt{P(z_0,t,x_0, 0)} \sqrt{\frac{\Psi(\frac{t}{2})^2}{\big( V_0(B_0(x_0,\sqrt{ \frac{t}{2}} ) )   \big)^{\frac{1}{2}} \big( t     \big)^{\frac{n}{4}}   }  }  \quad \text{use rmq} \ref{rem_3} \text{and H1}  \\ 
                 & \le q_a\frac{\psi(t)^{\frac32} e^{\frac14 \int_0^{t}  \sup_{M} R(u,.) - \inf_{M} R(u,.)   du }         }     
                    {\big( V_0(B_0(x_0,\sqrt{ \frac{t}{2}} ) )   \big)^{\frac{1}{2}}    V_0( B_0(y_0, \sqrt{\frac{t}{4}}))^{\frac{1}{4}} t^{\frac{n}{8}}   } e^{- \frac{ m_{a} d_t(x_0,y_0)^2}{  16t  } } \\
                 &\le q_a\frac{\psi(t)^{\frac32} e^{\frac14 \int_0^{t}  \sup_{M} R(u,.) - \inf_{M} R(u,.)   du }         }     
                    {\big( V_0(B_0(x_0,\sqrt{ \frac{t}{2}} ) )   \big)^{\frac{1}{2}}    V_0( B_0(y_0, \sqrt{\frac{t}{2}}))^{\frac{1}{2}}    } e^{- \frac{ m_{a} d_t(x_0,y_0)^2}{  16t  } } \\
\end{aligned}
$$ 
where in the two last inequalities we use Bishop-Gromov theorem volume comparison theorem  to compare volume of ball in positive Ricci curvature case to the corresponding Euclidean volume :

$ \frac{1}{r^n} \le \frac{cst_n}{v_0(B_0(x,r))}  $ and $ \frac{1}{v_0(B_0(x, r))}  \le  \frac{cst_n(\lambda) }{v_0(B_0(x, \lambda r))}  $ for $\lambda \ge 1 $.

By the same argument we have in a more natural way :

$$P(y_0,t,x_0, 0)    \le q_a\frac{ e^{ \int_0^{t} \frac58  \sup_{M} R(u,.) - \frac14 \inf_{M} R(u,.)   du }         }     
                    {\big( V_0(B_0(x_0,\sqrt{ t} ) )   \big)^{\frac{1}{2}}    V_0( B_0(y_0, \sqrt{t}))^{\frac{1}{2}}    } e^{- \frac{ m_{a} d_t(x_0,y_0)^2}{  16t  } }  $$  

\end{proof}
 \begin{remark}
  The constant $ \frac58$ and $\frac14 $ are far from being optimal. Moreover,assumption H1 does not seem to be be necessary. Hypothesis H2 is a sufficient condition 
for the existence in short time of the Ricci flow \cite{Shi}.

 \end{remark}
\begin{remark}
 The above estimate also produce  a control for the heat kernel of the conjugate heat equation.
\end{remark}

\begin{remark}
 We could apply the same strategies for a general family of metric.
\end{remark}

%%%%%%%%%%%%%%%%%%%%%%%%%%%%%%%%%%%%%%%%%%%%%%%%%%%%%%%%%%%%%%%%%%%%%%%%%%%%%%%%%%%%%%%%%%%%%%%%%%%%%%%%%%%%%%%%%
%%%%%%%%%%%%%%%%%%%%%%%%%%%%%%%%%%%%%%%%%%%%%%%%%%%%%%%%%%%%%%%%%%%%%%%%%%%%%%%%%%%%%%%%%%%%%%%%%%%%%%%%%%%%%%%%%
%%%%%%%%%%%%%%%%%%%%%%%%%%%%%%%%%%%%%%%%%%%%%%%%%%%%%%%%%%%%%%%%%%%%%%%%%%%%%%%%%%%%%%%%%%%%%%%%%%%%%%%%%%%%%%%%%
%%%%%%%%%%%%%%%%%%%%%%%%%%%%%%%%%%%%%%%%%%%%%%%%%%%%%%%%%%%%%%%%%%%%%%%%%%%%%%%%%%%%%%%%%%%%%%%%%%%%%%%%%%%%%%%%%
%%%%%%%%%%%%%%%%%%%%%%%%%%%%%%%%%%%%%%%%%%%%%%%%%%%%%%%%%%%%%%%%%%%%%%%%%%%%%%%%%%%%%%%%%%%%%%%%%%%%%%%%%%%%%%%%%
%%%%%%%%%%%%%%%%%%%%%%%%%%%%%%%%%%%%%%%%  BIographie %%%%%%%%%%%%%%%%%%%%%%%%%%%%%%%%%%%%%%%%%%%%%%%%%%%%%%%%%%%%
%%%%%%%%%%%%%%%%%%%%%%%%%%%%%%%%%%%%%%%%%%%%%%%%%%%%%%%%%%%%%%%%%%%%%%%%%%%%%%%%%%%%%%%%%%%%%%%%%%%%%%%%%%%%%%%%%
%%%%%%%%%%%%%%%%%%%%%%%%%%%%%%%%%%%%%%%%%%%%%%%%%%%%%%%%%%%%%%%%%%%%%%%%%%%%%%%%%%%%%%%%%%%%%%%%%%%%%%%%%%%%%%%%%
%%%%%%%%%%%%%%%%%%%%%%%%%%%%%%%%%%%%%%%%%%%%%%%%%%%%%%%%%%%%%%%%%%%%%%%%%%%%%%%%%%%%%%%%%%%%%%%%%%%%%%%%%%%%%%%%%
%%%%%%%%%%%%%%%%%%%%%%%%%%%%%%%%%%%%%%%%%%%%%%%%%%%%%%%%%%%%%%%%%%%%%%%%%%%%%%%%%%%%%%%%%%%%%%%%%%%%%%%%%%%%%%%%%

\end{document}